\documentclass[12pt,a4paper]{article}
\linespread{1}

\usepackage{amssymb} %Box
\usepackage{amsmath} %Operator
\usepackage{amsthm} %proof
\usepackage{hyperref}
\usepackage{a4wide}

\newtheorem{theorem}{Theorem}
\newtheorem{problem}[theorem]{Problem}
\newtheorem{proposition}[theorem]{Proposition}
\newtheorem{lemma}[theorem]{Lemma}
\newtheorem{corollary}[theorem]{Corollary}
\newtheorem{conjecture}[theorem]{Conjecture}
\newtheorem*{chernoff}{A Chernoff Bound}
\newtheorem*{slll}{The Lov\'asz Local Lemma}
\newtheorem*{claim}{Claim}

\newcommand{\Exp}{\,\mathbb{E}}
\renewcommand{\Pr}{\,\mathbb{P}}
\newcommand{\eps}{\varepsilon}

\DeclareMathOperator{\Bin}{Bin}

\DeclareMathOperator{\ch}{ch}
\DeclareMathOperator{\F}{\mathcal F}

\newcommand*{\myproofname}{Proof}
\newenvironment{claimproof}[1][\myproofname]{\begin{proof}[#1]}{\end{proof}}

\title{Asymmetric list sizes in bipartite graphs}

\author{
Noga Alon
\thanks{Department of Mathematics,
Princeton University, Princeton, NJ 08544, USA
and
Schools of Mathematics and Computer Science, Tel Aviv University,
Tel Aviv 6997801, Israel.
Email: \protect\href{mailto:nogaa@tau.ac.il}{\protect\nolinkurl{nogaa@tau.ac.il}}. Supported in part by NSF grant DMS-1855464, BSF grant 2018267 and the Simons Foundation.}
\and
Stijn Cambie
\thanks{Department of Mathematics, Radboud University, Postbus 9010, 6500 GL Nijmegen, Netherlands. 
Email: \protect\href{mailto:stijn.cambie@hotmail.com}{\protect\nolinkurl{stijn.cambie@hotmail.com}}, \protect\href{mailto:ross.kang@gmail.com}{\protect\nolinkurl{ross.kang@gmail.com}}. Supported by a Vidi grant (639.032.614) of the Netherlands Organisation for Scientific Research (NWO).}
\and
Ross J. Kang
\footnotemark[2]
}
%\date{}

\begin{document}

\maketitle

\begin{abstract}
Given a bipartite graph with parts $A$ and $B$ having maximum degrees at most $\Delta_A$ and $\Delta_B$, respectively, consider a list assignment such that every vertex in $A$ or $B$ is given a list of colours of size $k_A$ or $k_B$, respectively.

We prove some general sufficient conditions in terms of $\Delta_A$, $\Delta_B$, $k_A$, $k_B$ to be guaranteed a proper colouring such that each vertex is coloured using only a colour from its list. 
These are asymptotically nearly sharp in the very asymmetric cases.
We establish one sufficient condition in particular, where $\Delta_A=\Delta_B=\Delta$, $k_A=\log \Delta$ and $k_B=(1+o(1))\Delta/\log\Delta$ as $\Delta\to\infty$.
This amounts to partial progress towards a conjecture from 1998 of Krivelevich and the first author.

We also derive some necessary conditions through an intriguing connection between the complete case and the extremal size of approximate Steiner systems. 
We show that for complete bipartite graphs these conditions are asymptotically nearly sharp in a large part of the parameter space. This has provoked the following.

In the setup above, we conjecture that a proper list colouring is always guaranteed
\begin{itemize}
\item if $k_A \ge \Delta_A^\varepsilon$ and $k_B \ge \Delta_B^\varepsilon$ for any $\varepsilon>0$ provided $\Delta_A$ and $\Delta_B$ are large enough;
\item if $k_A \ge C \log\Delta_B$ and $k_B \ge C \log\Delta_A$ for some absolute constant $C>1$; or
\item if $\Delta_A=\Delta_B = \Delta$ and $ k_B \ge C (\Delta/\log\Delta)^{1/k_A}\log \Delta$ for some absolute constant $C>0$.
\end{itemize}
These are asymmetric generalisations of the above-mentioned conjecture of Krivelevich and the first author, and if true are close to best possible.
Our general sufficient conditions provide partial progress towards these conjectures.
\end{abstract}

%Key words: graph colouring, list colouring, probabilistic method
%MSC: 05C15, 05C35, 05D05

\section{Introduction}
\label{sec:intro}

List colouring of graphs, whereby arbitrary restrictions on the possible colours used per vertex are imposed, was introduced independently by Erd\H{o}s, Rubin and Taylor~\cite{ERT80} and by Vizing~\cite{Viz76}.
Let $G = (V,E)$ be a simple, undirected graph. 
For a positive integer $k$, a mapping $L: V\to \binom{{\mathbb Z}^+}{k}$ is called a {\em $k$-list-assignment} of $G$; a colouring $c: V\to {\mathbb Z}^+$ is called an {\em $L$-colouring} if $c(v)\in L(v)$ for any $v\in V$. We say $G$ is {\em $k$-choosable} if for any $k$-list-assignment $L$ of $G$ there is a proper $L$-colouring of $G$. The {\em choosability $\ch(G)$} (or {\em choice number} or {\em list chromatic number}) of $G$ is the least $k$ such that $G$ is $k$-choosable.

Note that if $G$ is $k$-choosable then it is properly $k$-colourable.
On the other hand, Erd\H{o}s, Rubin and Taylor~\cite{ERT80} exhibited bipartite graphs with arbitrarily large choosability. 
Let $K_{n_1,n_2}$ denote a complete bipartite graph with part sizes $n_1$ and $n_2$.
\begin{theorem}[\cite{ERT80}]\label{thm:ERT}
For some function $M(k)$ with $M(k) = 2^{k+o(k)}$ as $k\to\infty$,
\begin{enumerate}
\item\label{itm:i}
$K_{n,n}$ is not $k$-choosable if $n\ge M(k)$, and
\item
$K_{n_1,n_2}$ is $k$-choosable if $n_1+n_2 < M(k)$.
\end{enumerate}
\end{theorem}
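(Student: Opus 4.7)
The plan is to prove parts (i) and (ii) separately by two classical probabilistic arguments; combined they yield $M(k) = 2^{k+o(k)}$.

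For (ii), I would use the random colour partition. Given a $k$-list-assignment $L$ of $K_{n_1,n_2}$ with parts $A$ and $B$, let $C = \bigcup_v L(v)$, place each colour of $C$ into $C_A$ with probability $1/2$ independently (else into $C_B$), and attempt to colour each $v\in A$ using some colour of $L(v)\cap C_A$ and each $v\in B$ using some colour of $L(v)\cap C_B$. A vertex fails with probability $2^{-k}$, and a union bound gives success when $n_1+n_2<2^k$. Hence $K_{n_1,n_2}$ is $k$-choosable whenever $n_1+n_2 < 2^k$, so taking $M(k) \geq 2^k$ verifies (ii).

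For (i), I would use random lists. Fix a ground set $[N]$ with $N$ growing a little faster than $k$ (say $N=\lceil k\log k\rceil$) and sample each of the $2n$ lists independently and uniformly from $\binom{[N]}{k}$. A proper $L$-colouring of $K_{n,n}$ exists precisely when some $T\subseteq[N]$ (namely $T=c(A)$) satisfies $L(v)\cap T\neq\emptyset$ for every $v\in A$ and $L(v)\not\subseteq T$ for every $v\in B$. For fixed $T$ with $|T|=t$, write $\alpha_t=\binom{N-t}{k}/\binom{N}{k}$ and $\beta_t=\binom{t}{k}/\binom{N}{k}$; then the probability that $T$ witnesses a colouring is $(1-\alpha_t)^n(1-\beta_t)^n$. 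The key estimate is that for every $t$, $\max(\alpha_t,\beta_t)\geq \binom{\lfloor N/2\rfloor}{k}/\binom{N}{k}=2^{-k(1+o(1))}$, using that each factor $(N/2-i)/(N-i)$ is $(1+o(1))/2$ whenever $k\ll N$. The union bound over $T$ then gives
\[
\Pr\bigl[L\text{ is properly colourable}\bigr] \leq 2^N\exp\bigl(-n\cdot 2^{-k(1+o(1))}\bigr),
\]
which can be made less than $1$ for some $n=2^{k+o(k)}$, producing a bad $L$ and hence $M(k)\leq 2^{k+o(k)}$.

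The main obstacle will be balancing the parameter $N$ in (i): $N$ must be large enough that the ratio $\binom{\lfloor N/2\rfloor}{k}/\binom{N}{k}$ is $2^{-k(1+o(1))}$ (so the loss is absorbed into $o(k)$), yet small enough that the factor $2^N$ from the union bound is swallowed by the $\exp(-n\cdot 2^{-k-o(k)})$ factor. A choice such as $N=\lceil k\log k\rceil$ or $N=\lceil k^{1+\eta}\rceil$ does the job after routine Stirling-type estimates on the binomial ratios, and no extremal-set-theoretic input beyond these asymptotics is required.
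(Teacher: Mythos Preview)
The paper does not prove Theorem~\ref{thm:ERT}; it is quoted from~\cite{ERT80} as a classical background result, so there is no in-paper proof to compare your proposal against. That said, your sketch is the standard argument and is essentially correct. The random-partition step for (ii) is exactly the $p=\tfrac12$, $k_A=k_B=k$ case of the paper's own Lemma~\ref{lem:completeupper} (condition~\eqref{eqn:completeupper1}), and your random-list argument for (i) is the usual probabilistic construction; by contrast the paper's deterministic construction, Theorem~\ref{thm:completesteiner}, only recovers the weaker $\ch(K_{n,n})\gtrsim\tfrac12\log_2 n$, as remarked after Corollary~\ref{cor:completesteiner}.

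One slip worth fixing: ``taking $M(k)\ge 2^k$ verifies (ii)'' is backwards --- enlarging $M(k)$ weakens the hypothesis $n_1+n_2<M(k)$, so your partition argument verifies (ii) only for $M(k)\le 2^k$. Since part (i) produces a bad assignment only at some $n=2^{k+o(k)}$ possibly exceeding $2^k$, your two halves do not yet share a single $M(k)$. The clean resolution is to \emph{define} $M(k)$ as the least $n$ for which $K_{n,n}$ is not $k$-choosable. Then (i) holds by monotonicity, your two arguments sandwich $M(k)$ between $2^{k-1}$ and $2^{k+o(k)}$, and (ii) follows because $n_1+n_2<M(k)$ gives $K_{n_1,n_2}\subseteq K_{M(k)-1,M(k)-1}$, which is $k$-choosable by minimality of $M(k)$.
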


\noindent
%The parameter  $M(k)$ in Corollary~\ref{cor:completesteiner} is based on the extremal behaviour of the classic set theoretic ``Property~B''~\cite{Erd63,Erd64}.
More succinctly Theorem~\ref{thm:ERT} says that $\ch(K_{n,n}) \sim \log_2 n$ as $n\to\infty$.

There are two contrasting ways to try to strengthen this last statement.
First does the lower bound hold more generally; that is, does a bipartite graph with minimum degree $\delta$ have choosability $\Omega(\log \delta)$ as $\delta\to\infty$? Indeed this was shown by the first author~\cite{Alo00} using a probabilistic argument; later Saxton and Thomason~\cite{SaTh15} proved the asymptotically optimal lower bound of $(1+o(1))\log_2 \delta$.
Second does the upper bound hold more generally; that is, does a bipartite graph with maximum degree $\Delta$ always have choosability $O(\log \Delta)$ as $\Delta\to\infty$? Krivelevich and the first author conjectured this in 1998.

\begin{conjecture}[\cite{AlKr98}]\label{conj:AlKr}
There is some absolute constant $C>0$ such that
any bipartite graph of maximum degree at most $\Delta$
is $k$-choosable 
if $k \ge C\log \Delta$.
\end{conjecture}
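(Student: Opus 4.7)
The plan is to attack the conjecture via the semi-random (``nibble'') method, iteratively shrinking list sizes and uncoloured degrees in tandem so that the ratio $k(v)/\log d(v)$ stays bounded below throughout, where $k(v)=|L(v)|$ and $d(v)$ counts the still-uncoloured neighbours of $v$. Since the graph is bipartite with parts $A$ and $B$, I would alternate: in odd rounds activate a random $q$-fraction of uncoloured $A$-vertices, assign each activated $a$ a uniformly random colour $X_a\in L(a)$, and keep $X_a$ only if no activated neighbour of $a$ chose the same colour; in even rounds swap the roles of $A$ and $B$. Each uncoloured vertex $v$ then removes from $L(v)$ the colours just used on a newly-coloured neighbour, and the aim is to show that, with positive probability (guaranteed by Talagrand-type concentration and the Lov\'asz Local Lemma), both $k(v)$ and $d(v)$ shrink by comparable multiplicative factors, preserving $k(v)\ge C\log d(v)$ until $d(v)=0$.

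To see what must be controlled, fix $b\in B$ with current list of size $k$ and $\Delta$ uncoloured neighbours in $A$. After one random colouring round on $A$ the expected number of colours in $L(b)$ still available at $b$ equals $\sum_{c\in L(b)}(1-q/k)^{m_c}$, where $m_c=|\{a\in N(b):c\in L(a)\}|$ and $\sum_c m_c\le k\Delta$; by convexity this is minimised in the ``uniform intersection'' configuration $m_c\equiv \Delta$, giving roughly $k\exp(-q\Delta/k)$. At $k=C\log\Delta$ this expectation already drops to $o(1)$, so the first-moment nibble is \emph{tight} at precisely the conjectured threshold. This is the main obstacle: the concentration and LLL steps are essentially routine once the expectation is $\Omega(k)$, but in the worst case the expectation itself refuses to be large.

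The new ingredient I would try to introduce is a structural dichotomy driven by the profile $(m_c)_{c\in L(b)}$. If this profile is ``spread'', with many colours $c$ satisfying $m_c\ll\Delta$, the first-moment calculation above already yields $\Omega(k)$ surviving colours; if it is instead close to uniform, then the list hypergraph around $b$ is locally $K_{\Delta,\Delta}$-like and many pairs $a,a'\in N(b)$ must share essentially the same list, which should allow a correlated rounding across $N(b)$ that forces them to pick distinct colours. A preliminary random sparsification of each list, keeping each colour with a small constant probability $p$, might additionally convert spread profiles into genuine savings at a constant cost in $k$. The hardest step is showing that one of these regimes can always be handled: by Theorem~\ref{thm:ERT}, the local graph $K_{\Delta,\Delta}$ sits exactly on the boundary of $k$-choosability for $k=C\log\Delta$, so any proof must produce global slack across many nibble rounds rather than argue purely locally around each $b$.
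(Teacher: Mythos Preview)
This statement is a \emph{conjecture}, not a theorem: the paper does not prove it and explicitly presents it as open, noting that prior to this work there had been no substantial progress beyond Johansson's bound $k\ge C\Delta/\log\Delta$. The paper's own contribution towards it is the asymmetric partial result of Corollary~\ref{cor:main}, which is far weaker than the full conjecture.

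Your proposal is candid about where it breaks, and you correctly locate the obstruction: at $k=C\log\Delta$ the expected number of colours in $L(b)$ surviving one nibble round, in the extremal configuration $m_c\equiv\Delta$, is $k\exp(-q\Delta/k)=o(1)$, so the first moment gives nothing. The structural dichotomy you sketch does not close this gap. In the ``uniform'' regime, near-identical lists on $N(b)$ do not obviously permit any useful correlated rounding: distinct vertices of $N(b)$ need share no neighbour other than $b$, so there is no edge structure through which to coordinate their choices; and even if all $\Delta$ neighbours could be forced to choose pairwise distinct colours, a list of size $C\log\Delta$ would still be exhausted many times over, leaving nothing for $b$. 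The preliminary random sparsification you mention only rescales $k$ by a constant and does not change this picture. As you yourself note via Theorem~\ref{thm:ERT}, the local configuration around each $b$ sits exactly on the choosability threshold, so any successful argument must extract \emph{global} slack across rounds, and the proposal gives no mechanism for producing or tracking that slack. What you have is a reasonable research outline, not a proof; the conjecture remains open.
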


\noindent
Johansson's result for triangle-free graphs~\cite{Joh96} gives the conclusion with $k\ge C\Delta/\log\Delta$, which is far from the conjectured bound.  
Conjecture~\ref{conj:AlKr} is an elegant problem in a well-studied area, but apart from constant factors\footnote{In a recent advance, Molloy~\cite{Mol19} considerably improved on the constant in Johansson's, cf.~also~\cite{DJKP21}.}  
there has been no progress until now.

Despite the apparent difficulty of Conjecture~\ref{conj:AlKr}, we propose an asymmetric refinement.
Given  a bipartite graph $G=(V{=}A\cup B,E)$ with parts $A$, $B$ and positive integers $k_A$, $k_B$, a mapping $L: A\to \binom{{\mathbb Z}^+}{k_A},B\to \binom{{\mathbb Z}^+}{k_B}$ is called a {\em $(k_A,k_B)$-list-assignment} of $G$. We say $G$ is {\em $(k_A,k_B)$-choosable} if there is guaranteed a proper $L$-colouring of $G$ for any such $L$.

\begin{problem}\label{prob:asymmetric}
Given $\Delta_A$ and $\Delta_B$, what are optimal choices of $k_A\le \Delta_A$ and $k_B \le \Delta_B$ for which 
any bipartite graph $G=(V{=}A\cup B,E)$ with parts $A$ and $B$ having maximum degrees at most $\Delta_A$ and $\Delta_B$, respectively, is $(k_A,k_B)$-choosable?
\end{problem}
\noindent 
We have the upper bounds on $k_A$, $k_B$, since the problem is trivial if $k_A>\Delta_A$ or $k_B>\Delta_B$.

Note that Problem~\ref{prob:asymmetric}, since it has a higher-dimensional parameter space, has wider scope than Conjecture~\ref{conj:AlKr} and is necessarily more difficult.
However, the extra generality in Problem~\ref{prob:asymmetric} has permitted a glimpse at an unexpected and basic connection between list colouring and combinatorial design theory (Theorem~\ref{thm:completesteiner}).
This has prompted us to explore specific areas of the parameter space in Problem~\ref{prob:asymmetric}, motivating concrete versions of Problem~\ref{prob:asymmetric} in the spirit of Conjecture~\ref{conj:AlKr} (Conjecture~\ref{conj:general}).
One hope of ours is that further study of these problems may yield insights into 
Conjecture~\ref{conj:AlKr}.
But in fact, already in the present work, we have obtained asymmetric progress towards Conjecture~\ref{conj:AlKr} (Corollary~\ref{cor:main}).

Our first main result provides general progress towards Problem~\ref{prob:asymmetric}.

\begin{theorem}\label{thm:general}
Let the positive integers $\Delta_A$, $\Delta_B$, $k_A$, $k_B$, with $k_A\le \Delta_A$ and $k_B\le \Delta_B$, satisfy one of the following conditions.
\begin{enumerate}
%\item\label{itm:DJKP}
%Given $\eps>0$, we have $\Delta_A,\Delta_B\ge\Delta_0$ for some $\Delta_0$,
%\begin{align*}
%k_A\ge (1+\eps) \frac{\Delta_A}{\log\Delta_A}
%&\quad\text{and}\quad
%k_B\ge (1+\eps) \frac{\Delta_B}{\log\Delta_B}, \quad\text{ and, moreover,}\\
%\Delta_A \ge (192\log\Delta_B)^{2/\eps}
%&\quad\text{and}\quad
%\Delta_B \ge (192\log\Delta_A)^{2/\eps}.
%\end{align*}
\item\label{itm:transversals}
%Either
%$k_A \ge (ek_B\Delta_A)^{1/k_B}\Delta_B$ or 
$k_B \ge (ek_A\Delta_B)^{1/k_A}\Delta_A$.
\item\label{itm:coupon}
%\begin{align*}
\(\displaystyle
e(\Delta_A(\Delta_B-1)+1) \left(1-(1-1/k_B)^{\Delta_A\min\left\{1,k_B/k_A\right\}}\right)^{k_A} \le 1.
\)
%\end{align*}
\end{enumerate}
Then any bipartite graph $G=(V{=}A\cup B,E)$ with parts $A$ and $B$ having maximum degrees at most $\Delta_A$  and $\Delta_B$, respectively, is $(k_A,k_B)$-choosable.
\end{theorem}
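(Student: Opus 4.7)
The plan is to derive both conditions from a Lov\'asz Local Lemma (LLL) argument applied to a random partial colouring of one side. Independently assign each $u \in B$ a uniformly random colour $c(u) \in L(u)$. For each $v \in A$ define the bad event $B_v$ that every colour in $L(v)$ is chosen by some neighbour of $v$ in $B$; if none occurs, we extend greedily to a proper $L$-colouring. Since $B_v$ depends only on the choices at $N(v)$, two such events conflict only when the corresponding vertices share a $B$-neighbour, giving dependency degree at most $\Delta_A(\Delta_B-1)$---which matches the factor $\Delta_A(\Delta_B-1)+1$ in (ii). The ``roles exchanged'' version is obtained by colouring $A$ first and considering bad events on $B$.

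For (ii), the key is a careful bound on $\Pr[B_v]$. Writing $N_c(v) = \{u \in N(v) : c \in L(u)\}$, we have $\Pr[c \text{ is blocked}] \le 1 - (1 - 1/k_B)^{|N_c(v)|}$. Since the independent choices $c(u)$ make the blocking events of distinct colours negatively correlated (each $u$ picks only one colour), a Harris/FKG-type argument should give
\[
\Pr[B_v] \le \prod_{c \in L(v)} \left(1 - (1 - 1/k_B)^{|N_c(v)|}\right).
\]
Double counting yields $\sum_{c \in L(v)}|N_c(v)| = \sum_{u \in N(v)}|L(u) \cap L(v)| \le \Delta_A \min(k_A, k_B)$, and log-concavity of $x \mapsto 1 - (1 - 1/k_B)^x$ (verified by a second-derivative check) combined with Jensen's inequality bounds the product by its value at the arithmetic mean, producing $\Pr[B_v] \le (1 - (1 - 1/k_B)^{\Delta_A \min(1, k_B/k_A)})^{k_A}$. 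Plugging this into the symmetric LLL condition $e(d+1)\Pr[B_v] \le 1$ then yields (ii) exactly.

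For (i), I would seek a coarser variant. A naive union bound over injective ``witnesses'' $\phi : L(v) \to N(v)$ satisfying $c \in L(\phi(c))$ gives $\Pr[B_v] \le \Delta_A^{\underline{k_A}}/k_B^{k_A} \le (\Delta_A/k_B)^{k_A}$, which by the vanilla LLL produces the slightly weaker $k_B \ge \Delta_A(e\Delta_A\Delta_B)^{1/k_A}$. To obtain the stated (i) with $k_A$ in place of $\Delta_A$ under the $1/k_A$-th root, one must either sharpen the count of witnesses (using the structural constraint $|L(v)\cap L(u)| \le k_A$) or use a witness-based LLL in which each witness event only constrains $k_A$ neighbours of $v$, so that the effective dependency scales with $k_A\Delta_B$ rather than $\Delta_A\Delta_B$.

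The main obstacle is the concavity/Jensen step in (ii), together with the rigorous negative-correlation bound: one must recognise the blocking events as monotone in a suitable product partial order on the independent variables $c(u)$ in order to invoke Harris/FKG. For (i), matching the exact stated form appears to require a non-obvious refinement of the baseline LLL analysis---either a careful witness-based LLL or an improved counting argument---and is the more delicate of the two.
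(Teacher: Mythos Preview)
Your treatment of condition~(ii) is correct and matches the paper almost line for line: randomly colour $B$, define the bad event $T_v$ for each $v\in A$, establish negative correlation of the per-colour blocking events (the paper does this by a two-line induction rather than invoking Harris/FKG, but either works), apply Jensen to the concave function $x\mapsto\log\bigl(1-(1-1/k_B)^x\bigr)$ under the constraint $\sum_c x_c\le \Delta_A\min(k_A,k_B)$ together with the pointwise bound $x_c\le\Delta_A$, and finish with the symmetric LLL with dependency $\Delta_A(\Delta_B-1)$.

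For condition~(i) you have correctly diagnosed that the vertex-level LLL loses a factor and that a witness-level LLL is the cure, but your account of \emph{why} is slightly off and the execution is missing. The paper carries it out by recasting the problem as an independent transversal: build a $k_A$-uniform hypergraph $H$ on vertex set $\{(w,c):w\in B,\ c\in L(w)\}$, partitioned into the lists $L(w)$ (each of size $k_B$), with a hyperedge for every witness tuple arising from some $v\in A$. A uniformly random transversal (one colour per $w$) makes each hyperedge bad with probability $1/k_B^{k_A}$, and dependence runs through shared parts. The key count is the degree sum of a part, at most $\Delta_B\binom{\Delta_A-1}{k_A-1}k_A!\le \Delta_B\Delta_A^{k_A}$; the LLL dependency is then at most $k_A$ times this, and $e\,k_A\Delta_B\Delta_A^{k_A}/k_B^{k_A}\le 1$ is exactly~(i). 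Note the dependency is not ``$k_A\Delta_B$'' as you wrote; compared with your vertex-based bound $e\,\Delta_A\Delta_B\,(\Delta_A/k_B)^{k_A}$, what happens is that the factor $\Delta_A^{k_A}$ migrates from the probability (your union bound over witnesses) into the dependency, while the residual $\Delta_A$ becomes $k_A$ because a single hyperedge meets only $k_A$ parts.
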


%The Johansson-type result from Theorem~\ref{thm:general} under condition~\ref{itm:DJKP} is a specialisation of a recent local colouring result due to Davies, de Joannis de Verclos, Pirot, and the third author~\cite[Thm.~1]{DJKP21}.
\noindent
Theorem~\ref{thm:general} under condition~\ref{itm:transversals} follows from a simple application of the Lov\'asz Local Lemma, as we show in Section~\ref{sec:transversals}. This sufficient condition for Problem~\ref{prob:asymmetric} is related to independent transversals in hypergraphs, cf.~\cite{EGL94,Hax01}, and to single-conflict chromatic number~\cite{DEKO21}.
In the most asymmetric settings (when $k_A$ and $\Delta_A$ are fixed constants), condition~\ref{itm:transversals} is sharp up to a constant factor.
We prove Theorem~\ref{thm:general} under condition~\ref{itm:coupon} in Section~\ref{sec:coupon} with the Lov\'asz Local Lemma and a link to the coupon collector problem. 
%We remark that a recent `local' list colouring result for triangle-free graphs due to Davies, de Joannis de Verclos, Pirot and the third author~\cite[Thm.~1]{DJKP21} implies a third sufficient condition for Problem~\ref{prob:asymmetric}, roughly $((1+o(1)) \Delta_A/\log \Delta_A, (1+o(1)) \Delta_B/\log \Delta_B)$-choosability, subject to some mild regularity conditions. 
In an attempt to clear up the `parameter soup' arising from Problem~\ref{prob:asymmetric} and the above sufficient conditions, we will discuss specific natural context for these after presenting some further results.

We complement our sufficient conditions for $(k_A,k_B)$-choosability with necessary ones, given mainly by the complete bipartite graphs.
An easy boundary case is provided as follows, for warmup. This generalises a classic non-$k$-choosable construction.
\begin{proposition}\label{prop:classic}
For any $\delta,k\ge 2$, the complete bipartite graph $G=(V{=}A\cup B,E)$
with $|A|=\delta^k$ and $|B|=k$ is not $(k,\delta)$-choosable.
\end{proposition}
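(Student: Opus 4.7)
The plan is to build an explicit bad list assignment that generalises the classic construction in which the ground set of colours has a product structure. I would take as the colour set $[k]\times[\delta]$, which has total size $k\delta$. Label the $k$ vertices of $B$ as $b_1,\dots,b_k$ and set $L(b_i)=\{i\}\times[\delta]$, a list of size $\delta$ as required. Label the $\delta^k$ vertices of $A$ by tuples $\mathbf{j}=(j_1,\dots,j_k)\in[\delta]^k$ and set
\[
L(a_{\mathbf{j}})=\{(1,j_1),(2,j_2),\dots,(k,j_k)\},
\]
a list of size $k$. So every colour that appears in any $L(a_{\mathbf{j}})$ lies in exactly one $L(b_i)$, namely the one with the matching first coordinate.

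The key reduction is the following: any proper $L$-colouring $c$ restricted to $A$ is encoded by a choice function $f\colon[\delta]^k\to[k]$, where $f(\mathbf{j})=i$ means $c(a_{\mathbf{j}})=(i,j_i)$. Since $A$ is complete to $B$, vertex $b_i$ is blocked precisely when every colour in $\{i\}\times[\delta]$ has been used on $A$; that is, when for every $j\in[\delta]$ there exists $\mathbf{j}\in[\delta]^k$ with $j_i=j$ and $f(\mathbf{j})=i$. So it suffices to prove the combinatorial claim that for every function $f\colon[\delta]^k\to[k]$, at least one index $i\in[k]$ has this property.

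I would prove this claim by a one-line diagonal argument. Suppose it fails. Then for each $i\in[k]$ there is some $j_i^{*}\in[\delta]$ such that no tuple with $i$-th coordinate equal to $j_i^{*}$ is mapped to $i$ by $f$. Consider the specific tuple $\mathbf{j}^{*}=(j_1^{*},\dots,j_k^{*})$ and let $i_0=f(\mathbf{j}^{*})\in[k]$. Then $\mathbf{j}^{*}$ has $i_0$-th coordinate $j_{i_0}^{*}$ and is mapped to $i_0$ by $f$, directly contradicting the defining property of $j_{i_0}^{*}$. This completes the argument.

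There is no real obstacle here: the only nontrivial ingredient is guessing the product colour set and the product-structured lists on $A$, after which the combinatorial claim is a two-line pigeonhole/diagonal argument. The construction specialises, when $\delta=k$, to the familiar non-$k$-choosable witness on $K_{k^k,k}$ (equivalent, up to the usual transformation, to the Erd\H{o}s--Rubin--Taylor $\binom{2k-1}{k}$-type bounds in the symmetric regime), so it slots naturally into the existing literature as a clean asymmetric generalisation.
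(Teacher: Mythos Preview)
Your proof is correct and uses exactly the same construction as the paper: $k$ disjoint lists of size $\delta$ on $B$, and all $\delta^k$ transversal $k$-tuples on $A$. The paper states the construction in one sentence and leaves the verification implicit, whereas you spell out the diagonal argument explicitly; the only quibble is the parenthetical claiming equivalence with the Erd\H{o}s--Rubin--Taylor $\binom{2k-1}{k}$ construction, which is a different (and not directly equivalent) witness.
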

\begin{proof}%[First proof of Proposition~\ref{prop:classic}]
Let the vertices of $B$ be assigned $k$ disjoint lists of length $\delta$, and let the vertices of $A$ be assigned all possible $k$-tuples drawn from these $k$ disjoint lists. %Then clearly no list colouring can be found.
%\end{proof}
%\noindent
%This is best possible in the sense that $K_{ \delta ^k , k-1}$ and $K_{ \delta ^k-1 , k}$ are $(k,\delta)$-choosable.
%Second this is sharp in the sense that if the degree on either side is reduced by one then it becomes $(k,\delta)$-choosable. ???double-check, do we need to include the argument??? 
%Now suppose $|A|<\delta^k$ %or $|B|<k$.
%		and consider any $(k, \delta)$-list-assignment $L$. If there are two vertices $v, w\in B$ for which $L(v) \cap L(w) \not= \emptyset,$ then we can properly $L$-colour the graph.
%		Otherwise, all lists in $B$ are disjoint and hence at least one element of the cartesian product of those lists does not occur as a list in the left. colour the vertices on the right with the colours of such a list which was not on the left.
%		Then one can colour the vertices on the left as well.
\end{proof}

\noindent
This is best possible in the sense that the conclusion does not hold if $|A|<\delta^k$ or $|B|<k$; however, in Section~\ref{sec:improvement} we exhibit a non-complete  non-$(k,\delta)$-choosable construction that is a slightly more efficient (in the sense that it has $\Delta_A=k$ and $\Delta_B< \delta^k$).
On the other hand, Proposition~\ref{prop:classic} shows that condition~\ref{itm:transversals} in  Theorem~\ref{thm:general} cannot be relaxed much in general. This follows for instance by considering the most asymmetric case, namely $k=k_A=\Delta_A$ and $\delta=\Delta_B^{1/k}$, with $k$ fixed and $\Delta_B \to\infty$.
Thus for fixed $k_A$ and $\Delta_A$ Problem~\ref{prob:asymmetric} is settled up to a constant factor.

Our second main result is a related but much broader necessary condition for Problem~\ref{prob:asymmetric}, also via the complete case.
Let us write  $\overline{M}(k_1,k_2,\ell)$ for the hypergraph Tur\'an number defined as
 the minimum number of edges in a $k_2$-uniform hypergraph on $\ell$ vertices with no independent set of size $\ell-k_1.$
 This parameter is equivalent to the extremal size of approximate Steiner systems;
in particular, $\overline{M}(k_1,k_2,\ell)$ is equal to the cardinality of a smallest $k_1$-$(\ell,\ell-k_2,\mathbb{Z}^+)$ design, cf.~\cite[Ch.~13]{ES74} and~\cite{Kee11}.
(Recall that a $t$-$(v,k,\mathbb{Z}^+)$ design is a collection of $k$-element subsets, called blocks, of some $v$-element set $X$ such that each $t$-element subset of $X$ is contained in at least one block.)
%For this, note that a family $\F \subseteq \binom{[\ell]}{k_2}$ does not have Property~$A(k_1,k_2,\ell)$ if and only if for every $k_1$-element set $T$ there is a set in the family $\{[\ell] \setminus S \colon S \in \F\}$ of complements of $\F$ containing $T$, i.e.~$T \subseteq [\ell] \setminus S$ for some $S \in \F$.
We can draw a link between this classical extremal parameter and Problem~\ref{prob:asymmetric}.
We show the following result in Section~\ref{sec:completesteiner}. 

\begin{theorem}\label{thm:completesteiner}
	Let $k_A,k_B,\ell, k_1,k_2$ be integers such that $k_A,k_B\ge 2$ and $\ell = k_1+k_2+1$.
	The complete bipartite graph $G=(V{=}A\cup B,E)$ with $|A|=\overline{M}(k_1,k_A,\ell)$ and $|B|=\overline{M}(k_2,k_B,\ell)$ is not $(k_A,k_B)$-choosable.
\end{theorem}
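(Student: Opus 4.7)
The plan is to construct, from the extremal hypergraphs defining $\overline{M}(k_1,k_A,\ell)$ and $\overline{M}(k_2,k_B,\ell)$, a bad list assignment on $G$ using a universe of exactly $\ell = k_1+k_2+1$ colours. The complete bipartite structure forces the colour sets used on the two sides to be disjoint; since they sit inside a set of size $\ell$, a pigeonhole split in proportions $k_1$ versus $k_2$ is forced, and each branch will be killed by the corresponding Tur\'an/design condition.

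More concretely, I would take $H_A$ to be a $k_A$-uniform hypergraph on $[\ell]$ with $\overline{M}(k_1,k_A,\ell)$ edges and no independent set of size $\ell-k_1 = k_2+1$, and $H_B$ to be a $k_B$-uniform hypergraph on $[\ell]$ with $\overline{M}(k_2,k_B,\ell)$ edges and no independent set of size $\ell-k_2 = k_1+1$. Bijectively label $A$ by the edges of $H_A$ and $B$ by the edges of $H_B$, and let the list of each vertex be the corresponding edge (a $k_A$-subset of $[\ell]$ for $A$, and a $k_B$-subset for $B$).

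Now suppose for contradiction that $c$ is a proper $L$-colouring of $G$. Let $C_A = c(A)$ and $C_B = c(B)$. Since $G$ is complete bipartite, $C_A \cap C_B = \emptyset$, so $|C_A|+|C_B| \le \ell = k_1+k_2+1$. Hence $|C_A| \le k_1$ or $|C_B| \le k_2$. In the first case, $[\ell] \setminus C_A$ has size at least $k_2+1$, and by the defining property of $H_A$ it contains some edge $e$; the vertex $v \in A$ with list $e$ then has $c(v) \in L(v) \subseteq [\ell]\setminus C_A$, contradicting $c(v) \in C_A$. The second case is symmetric using $H_B$. This pair of symmetric contradictions closes the argument.

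I do not expect a serious obstacle: once one sees that $\ell = k_1+k_2+1$ is tuned exactly to make the pigeonhole force an independent set of the forbidden size on one of the two hypergraphs, the rest is bookkeeping. The only mild subtlety is keeping straight which index $k_i$ controls which side (the $A$-side lists come from the hypergraph whose forbidden independent sets have size $k_2+1$, and vice versa), and verifying the construction reduces to Proposition~\ref{prop:classic} in the degenerate case where one of $H_A$, $H_B$ can be taken to be a matching of disjoint edges.
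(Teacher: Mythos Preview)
Your argument is correct and is essentially identical to the paper's: both assign as lists the edges of two extremal hypergraphs on a common $\ell$-element colour universe and then use $\ell=k_1+k_2+1$ to force the colour sets on the two sides to overlap. The only cosmetic difference is that the paper phrases the hypergraph property via ``no $k_i$-element transversal'' (so $|C_A|\ge k_1+1$ and $|C_B|\ge k_2+1$ directly), whereas you phrase it via ``no independent set of size $\ell-k_i$'' and argue by the contrapositive; these are two wordings of the same fact.
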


\noindent
This link allows us, using known results for $\overline{M}$, to read off decent necessary conditions for specific parameterisations of Problem~\ref{prob:asymmetric}. As one example, a lower bound on $\ch(K_{n,n})$ of the form $\ch(K_{n,n})\gtrsim \frac12\log_2 n$ follows from Theorem~\ref{thm:completesteiner} with the choice $k_1=k_2=k-1$, $k_A=k_B=k$, and $\ell=2k-1$ for some $k$.
As another example, a slightly weaker form of the necessary condition of Proposition~\ref{prop:classic} follows with the choice $k_1=k_A(k_B-1)$, $k_2=k_A-1$, and $\ell=k_Ak_B$.
We detail both of these easy examples in Section~\ref{sec:completesteiner}.

Note though that these last two examples show that Theorem~\ref{thm:completesteiner} provides suboptimal necessary conditions for $(k_A,k_B)$-choosability even in the complete case. Furthermore, in Section~\ref{sec:improvement} we give a construction to show that the complete case cannot, in general, be precisely extremal for Problem~\ref{prob:asymmetric}.
Nevertheless,  we surmise that Theorem~\ref{thm:completesteiner} provides some good rough borders for Problem~\ref{prob:asymmetric}.
More specifically, in Section~\ref{sec:SufComplete} we give some basic sufficient conditions for  $(k_A,k_B)$-choosability specific to the complete case and show in Section~\ref{sec:asymptoticcomplete}, through some routine asymptotic calculus, how these roughly match with Theorem~\ref{thm:completesteiner} over a broad family of parameterisations for Problem~\ref{prob:asymmetric}. 
Then, just as Conjecture~\ref{conj:AlKr} was informed by Theorem~\ref{thm:ERT}, the asymptotic behaviour of $(k_A,k_B)$-choosability in the complete case leads us to conjecture the following concrete versions of Problem~\ref{prob:asymmetric}.

\begin{conjecture}\label{conj:general}
Let the positive integers $\Delta_A$, $\Delta_B$, $k_A$, $k_B$ satisfy one of the following.
\begin{enumerate}
\item\label{itm:uncrossed}
Given $\eps>0$, we have $\Delta_A,\Delta_B\ge \Delta_0$ for some $\Delta=\Delta_0(\eps)$, and 
\begin{align*}
k_A \ge \Delta_A^\eps
\quad\text{and}\quad
k_B \ge \Delta_B^\eps.
\end{align*}
\item\label{itm:crossed}
For some absolute constant $C>1$,
\begin{align*}
k_A \ge C \log\Delta_B
\quad\text{and}\quad
k_B \ge C \log\Delta_A.
\end{align*}
\item\label{itm:symmetricasymmetric}
$\Delta_A=\Delta_B=\Delta$, and,
for some absolute constant $C>0$,
\begin{align*}
k_B \ge C (\Delta/\log\Delta)^{1/k_A}\log \Delta
\quad\text{or}\quad
k_A \ge C (\Delta/\log\Delta)^{1/k_B}\log \Delta.
\end{align*}
\end{enumerate}
Then any bipartite graph $G=(V{=}A\cup B,E)$ with parts $A$ and $B$ having maximum degrees at most $\Delta_A$  and $\Delta_B$, respectively, is $(k_A,k_B)$-choosable.
\end{conjecture}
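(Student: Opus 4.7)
The plan is to attempt each condition in Conjecture~\ref{conj:general} via a semi-random iterative colouring in the spirit of Johansson and Molloy, adapted to exploit the asymmetry between the two sides. The baseline argument is a two-phase colouring: assign each $v\in A$ a uniformly random colour from $L(v)$, delete the used colour from the lists of its $B$-neighbours, and apply the Lov\'asz Local Lemma on the residual list-colouring instance on $B$. For a vertex $w\in B$, its residual list has expected size at least $k_B(1-1/k_A)^{\Delta_A}\ge k_B e^{-\Delta_A/k_A}$, and one would like to colour $w$ greedily from it after Talagrand-type concentration. This single round should already handle condition~\ref{itm:crossed} whenever $\Delta_A$ and $\Delta_B$ are polynomially related, since taking $C$ large forces the residual list on each $w$ to comfortably dominate its effective degree for the LLL.

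For condition~\ref{itm:uncrossed} in the genuinely unbalanced regime, where for instance $\Delta_B$ is much larger than $\Delta_A^{1/\eps}$, one iterates: in each round colour each $v\in A$ with a fresh random colour from its current list with some small activation probability $p$, delete accordingly, and reapply. The main invariant is that the ratio $|L(w)|/\deg(w)$ for $w\in B$ concentrates around its expectation in each round and stays favourable; after $O(\log\log\Delta)$ rounds it should be large enough to finish with LLL. Condition~\ref{itm:symmetricasymmetric} is the most delicate, since $k_A$ may be as small as $1$. There I would replace single-colour assignments on $A$ by random $k_A'$-subsets of $L(v)$ for a suitable $k_A'$, so that colour-deletions on $B$ behave Poisson-like even when $k_A$ is bounded; the parameter $k_A$ enters the threshold through the distribution of colour loads on $B$-vertices, producing the characteristic exponent $1/k_A$ in $(\Delta/\log\Delta)^{1/k_A}$.

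The hardest step will be to simultaneously (a) maintain concentration of list sizes and codegrees through all rounds, since with very small $k_A$ standard Chernoff bounds are tight only over many independent trials; (b) keep the dependency graph for the LLL of manageable maximum degree as information propagates between rounds; and (c) bridge the gap between what LLL actually delivers and the conjectured threshold, which in the symmetric instance $\Delta_A=\Delta_B=\Delta$, $k_A=k_B=C\log\Delta$ reduces exactly to Conjecture~\ref{conj:AlKr} and has resisted all known techniques. Thus while the semi-random framework appears to be the correct setting for partial progress, a complete proof of Conjecture~\ref{conj:general} almost certainly requires an essentially new idea, perhaps via entropy-compression in the style of Molloy's recent work or via a reduction to extremal questions on approximate Steiner systems in the spirit of Theorem~\ref{thm:completesteiner}.
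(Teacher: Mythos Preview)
The statement you are attempting to prove is a \emph{conjecture}: the paper does not prove it, and indeed explicitly presents it as open. What the paper does is give partial progress---Theorem~\ref{thm:general} under conditions~\ref{itm:transversals} and~\ref{itm:coupon}, and the corollaries drawn from them---together with evidence from the complete case (Theorems~\ref{thr:3CasesChoos} and~\ref{thm:completecomparison}) that the conjectured thresholds are the right ones. There is no proof in the paper to compare your proposal against.

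Your proposal is not a proof either, and to your credit you say so in the final paragraph. The sketch you give is a reasonable research programme, but it has a hard obstruction that you identify yourself: under condition~\ref{itm:crossed} with $\Delta_A=\Delta_B=\Delta$ and $k_A=k_B=C\log\Delta$, the assertion is exactly Conjecture~\ref{conj:AlKr}, which has been open since 1998 and is not known to follow from any semi-random or nibble-type argument. So no amount of tuning the activation probabilities or the concentration bounds in your two-phase scheme can close this case without a genuinely new idea. The same obstruction blocks condition~\ref{itm:symmetricasymmetric} at $k_A\sim\log\Delta$, and condition~\ref{itm:uncrossed}, while weaker than Conjecture~\ref{conj:AlKr}, is still open (the paper shows only that it reduces to its symmetric form). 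In short, the gap is not a technical oversight in your outline but the fact that the target is an open problem; what you have written is a plausible line of attack, not a proof.
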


\noindent
We note that in Section~\ref{sec:SufComplete} we also show how this conjecture holds for the complete bipartite graphs (Theorem~\ref{thr:3CasesChoos}).
Conjecture~\ref{conj:general} constitutes three natural asymmetric analogues of Conjecture~\ref{conj:AlKr}. The first is weaker than Conjecture~\ref{conj:AlKr}; the latter two are stronger.
We discuss these separately in turn.

Behind condition~\ref{itm:transversals}, Conjecture~\ref{conj:general} concerns the question, for what positive functions $f$ does $k_A\ge f(\Delta_A)$ and $k_B\ge f(\Delta_B)$ suffice for $(k_A,k_B)$-choosability, no matter how far apart $\Delta_A$ and $\Delta_B$ are? Essentially we posit that some $f$ with $f(x)=x^{o(1)}$ as $x\to\infty$ will work, and this would be best possible due to the complete case (Theorem~\ref{thm:completecomparison}\ref{itm:uncrossedcomplete,nec}). In particular, $f(x)=O(\log x)$ is impossible, in contrast to Conjecture~\ref{conj:AlKr}.
But in fact, Theorem~\ref{thm:general} implies that Conjecture~\ref{conj:general} under condition~\ref{itm:transversals} reduces to its most symmetric form.

\begin{corollary}%[Proof conjecture~\ref{conj:asymConjAK} in the very asymmetric case]
Given $0<\eps<1$
	and integers $\Delta_A$, $\Delta_B$ satisfying $\Delta_B> \Delta_A^{2/\eps}$ and $\Delta_A> (4/ \eps)^{1/ \eps}$,
 any bipartite graph $G=(V{=}A\cup B,E)$ with parts $A$ and $B$ having maximum degrees at most $\Delta_A$  and $\Delta_B$, respectively, is $(\Delta_A^{\eps}, \Delta_B^{\eps})$-choosable.

	As a consequence, Conjecture~\ref{conj:general} under condition~\ref{itm:uncrossed} follows from the same assertion under the further assumption that $\Delta_A=\Delta_B=\Delta$.
\end{corollary}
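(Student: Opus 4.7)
The plan is to apply Theorem~\ref{thm:general}\ref{itm:transversals} with $k_A=\Delta_A^{\eps}$ and $k_B=\Delta_B^{\eps}$ (rounding up if necessary). Since $k_A\le k_B$ under our ordering, the relevant instance of the condition reads
\[
\Delta_B^{\eps}\ \ge\ (e\Delta_A^{\eps}\Delta_B)^{\Delta_A^{-\eps}}\,\Delta_A,
\]
which after taking logarithms becomes
\[
\eps\log\Delta_B\ \ge\ \Delta_A^{-\eps}\bigl(1+\eps\log\Delta_A+\log\Delta_B\bigr)+\log\Delta_A.
\]
The two hypotheses are calibrated exactly to dominate the two error terms: $\Delta_A>(4/\eps)^{1/\eps}$ yields $\Delta_A^{-\eps}\le \eps/4$, and $\Delta_B>\Delta_A^{2/\eps}$ yields $\log\Delta_A\le (\eps/2)\log\Delta_B$. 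Substituting these bounds, using $\eps<1$ to absorb the cubic $\eps^3/8$ contribution, and collecting terms gives a right-hand side of at most $\eps/4+(7\eps/8)\log\Delta_B$. This falls below $\eps\log\Delta_B$ as soon as $\log\Delta_B\ge 2$, which is easily guaranteed by the two hypotheses.

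For the consequence, grant the symmetric form of Conjecture~\ref{conj:general}\ref{itm:uncrossed}: for every $\eps'>0$ there is $\Delta_0(\eps')$ such that any bipartite graph with both maximum degrees at most $\Delta\ge\Delta_0(\eps')$ is $(\Delta^{\eps'},\Delta^{\eps'})$-choosable. Given an asymmetric instance for some $\eps>0$, assume without loss of generality $\Delta_A\le\Delta_B$ and split into two cases. If $\Delta_B>\Delta_A^{2/\eps}$ and $\Delta_A$ is large, apply the first part of the corollary directly. Otherwise $\Delta_A\le \Delta_B\le \Delta_A^{2/\eps}$, which rearranges to $\Delta_A^{\eps}\ge \Delta_B^{\eps^{2}/2}$; viewing $G$ as a bipartite graph both of whose sides have maximum degree at most $\Delta=\Delta_B$, the list sizes satisfy $k_A,k_B\ge \Delta^{\eps^{2}/2}$, so the symmetric hypothesis with $\eps'=\eps^{2}/2$ closes the case. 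Choosing $\Delta_0(\eps)$ to exceed both $(4/\eps)^{1/\eps}$ and $\Delta_0(\eps^{2}/2)$ makes both branches simultaneously applicable.

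The only step requiring any care is verifying the logarithmic inequality in the first paragraph, which is elementary but sensitive to constants; everything else is structural. The conceptual content is that the scaling $\Delta_B=\Delta_A^{2/\eps}$ is exactly the boundary at which condition~\ref{itm:transversals} of Theorem~\ref{thm:general} produces matching polynomial list sizes, and this same boundary is what permits one to reduce the asymmetric conjecture to its diagonal special case with a slightly worse exponent $\eps^{2}/2$.
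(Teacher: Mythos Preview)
Your proof is correct and follows essentially the same approach as the paper: verify condition~\ref{itm:transversals} of Theorem~\ref{thm:general} directly for the first part, then split into cases $\Delta_B>\Delta_A^{2/\eps}$ versus $\Delta_B\le\Delta_A^{2/\eps}$ for the reduction. The only differences are cosmetic --- you take logarithms where the paper works multiplicatively, and in the reduction you invoke the symmetric case with exponent $\eps^2/2$ rather than the paper's $\eps^3$ (your choice is in fact slightly cleaner, as the paper's bound $\Delta_B^{\eps^3}\le\Delta_A^{2\eps^2}<\Delta_A^{\eps}$ tacitly needs $\eps<1/2$).
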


\begin{proof}
	It suffices to check condition~\ref{itm:transversals} of Theorem~\ref{thm:general} with $k_A=\Delta_A^{\eps}$ and $k_B=\Delta_B^{\eps}$. Indeed, as $\Delta_B^2 > e \Delta_A^{\eps} \Delta_B$, $\eps/4>1/\Delta_A ^{\eps}$ and $\Delta_B^{\eps/2}> \Delta_A$, we have
	$\Delta_B^{\eps}> \left( e \Delta_A^{\eps} \Delta_B \right) ^{1/\Delta_A^{\eps}}\Delta_A$.
	
%	Remark: Theorem~\ref{thm:general} under condition~\ref{itm:coupon} also gives the result with a condition of the form $  \Omega( \log(\Delta_B)^{\frac{1}{ \eps}}) = \Delta_A = O( \Delta_B^{\eps} )$.

Assume condition~\ref{itm:uncrossed} of Conjecture~\ref{conj:general}, and moreover assume the truth of the conjecture only for its most symmetric form $\Delta_A=\Delta_B=\Delta$. We may assume that $\Delta_0>(4/ \eps)^{1/ \eps}$, and so, without loss of generality, we may also assume by the first part that $\Delta_A \le \Delta_B \le  \Delta_A^{2/\eps}$. For any bipartite graph $G=(V{=}A\cup B,E)$ with parts $A$ and $B$ having maximum degrees at most $\Delta_A$  and $\Delta_B$, we have from the assumption that $G$ is $k$-choosable for, say, $k=\Delta_B^{\eps^3}\le \Delta_A^{2\eps^2}$, provided $\Delta_B$, and thus $\Delta_A$, is large enough as a function of $\eps$. This is smaller than $\Delta_A^{\eps}$ for $\Delta_A$ sufficiently large as a function of $\eps$, as required.
\end{proof}

\noindent
Thus Conjecture~\ref{conj:general} under condition~\ref{itm:uncrossed} would follow from a weaker form of Conjecture~\ref{conj:AlKr}.
By an earlier work due to Davies, de Joannis de Verclos, Pirot and the third author~\cite{DJKP21}, we so far know that $f(x)=(1+o(1))x/\log x$ works.
%Yes, it does from Theorem~\ref{thm:general}\ref{itm:transversals}. Assuming k_A \ge k_B and k_B -> infty, k_A \ge (1+o(1)) Delta_A / log Delta_A works, where the o(1) depends on k_B.

Under condition~\ref{itm:crossed}, Conjecture~\ref{conj:general} concerns a `crossed' version of the previous question, so for what positive functions $g$ does $k_A \ge g(\Delta_B)$ and $k_B \ge g(\Delta_A)$ suffice for $(k_A,k_B)$-choosability? In this case, we conjecture that $g(x)=O(\log x)$ will work, which coincides with Conjecture~\ref{conj:AlKr} in the symmetric case $\Delta_A=\Delta_B=\Delta$.
The complete bipartite graphs demonstrate the hypothetical sharpness of this assertion up to a constant factor for nearly the entire range of possibilities for $\Delta_A$ and $\Delta_B$ (Theorem~\ref{thm:completecomparison}\ref{itm:crossedcomplete,nec}).
Some modest partial progress towards Conjecture~\ref{conj:general} under condition~\ref{itm:crossed} follows from Theorem~\ref{thm:general}.

\begin{corollary}
Given $\eps>0$, there exists $\delta$ such that for $\Delta_B$ large enough, 
any bipartite graph $G=(V{=}A\cup B,E)$ with parts $A$ and $B$ having maximum degrees at most $\Delta_A$  and $\Delta_B$, respectively, is $(\delta\log \Delta_B, (1+\eps)\Delta_A)$- and $(\log \Delta_B, (e+\eps)\Delta_A)$-choosable.
\end{corollary}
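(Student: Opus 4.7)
The plan is to deduce both claims directly from condition~\ref{itm:transversals} of Theorem~\ref{thm:general}, namely $k_B \ge (ek_A\Delta_B)^{1/k_A}\Delta_A$, by a short asymptotic calculation. The key observation is that if one substitutes $k_A \sim c \log \Delta_B$ (with $c$ a positive constant), then the pesky factor $(ek_A\Delta_B)^{1/k_A}$ tends to $e^{1/c}$ as $\Delta_B\to\infty$; so the transversal bound reduces to $k_B \gtrsim e^{1/c}\Delta_A$. Setting $c=1$ recovers the constant $e$, and making $c$ large shrinks the constant toward $1$, matching the two stated regimes.

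For the $(\log \Delta_B, (e+\eps)\Delta_A)$-part, I would set $k_A = \lceil \log \Delta_B\rceil$ and $k_B = \lceil(e+\eps)\Delta_A\rceil$. Writing $x=\log \Delta_B$ and computing in natural logarithms,
\[
(ek_A\Delta_B)^{1/k_A} \le \bigl(e\,(x+1)\,e^x\bigr)^{1/x} = e \cdot e^{1/x} \cdot (x+1)^{1/x},
\]
which tends to $e$ as $\Delta_B\to\infty$. Hence for $\Delta_B$ sufficiently large (depending only on $\eps$) the right-hand side is at most $(e+\eps)\Delta_A\le k_B$, and Theorem~\ref{thm:general}\ref{itm:transversals} applies.

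For the $(\delta \log \Delta_B, (1+\eps)\Delta_A)$-part, I would first choose $\delta$ strictly larger than $1/\log(1+\eps)$, so that $e^{1/\delta}<1+\eps$. Setting $k_A=\lceil \delta \log \Delta_B\rceil$ and $k_B=\lceil (1+\eps)\Delta_A\rceil$, the same computation yields
\[
(ek_A\Delta_B)^{1/k_A} \longrightarrow e^{1/\delta} < 1+\eps
\qquad\text{as }\Delta_B\to\infty,
\]
so again Theorem~\ref{thm:general}\ref{itm:transversals} is applicable for $\Delta_B$ large enough, giving the desired $(k_A,k_B)$-choosability.

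There is no real obstacle here beyond bookkeeping: since the statement only asks that the conclusion hold for $\Delta_B$ large, the sublogarithmic error terms $(x+1)^{1/x}$ and $e^{1/x}$ can be absorbed comfortably into the slack $\eps$ (respectively, into the gap $1+\eps - e^{1/\delta}$). One should also note that shrinking $\Delta_A$ is never necessary, since enlarging lists cannot hurt, and that taking ceilings of $k_A$ and $k_B$ only makes the hypothesis easier to verify.
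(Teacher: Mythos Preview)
Your proposal is correct and follows the same approach as the paper: both parts are obtained by plugging the stated choices of $k_A$ and $k_B$ into condition~\ref{itm:transversals} of Theorem~\ref{thm:general} and observing that $(ek_A\Delta_B)^{1/k_A}\to e^{1/c}$ when $k_A\sim c\log\Delta_B$. The paper's proof is a single sentence to this effect; you have simply supplied the routine asymptotic details (and the harmless bookkeeping with ceilings) that the paper leaves implicit.
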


\begin{proof}
	This follows for $\Delta_B$ large enough from condition~\ref{itm:transversals} of Theorem~\ref{thm:general} with either $k_A=\delta\log\Delta_B$ and $k_B= (1+\eps)\Delta_A$ or with $k_A=\log \Delta_B$ and $k_B=(e+\eps)\Delta_A$.
\end{proof}

Under condition~\ref{itm:symmetricasymmetric}, Conjecture~\ref{conj:general} concerns the setting most closely related to Conjecture~\ref{conj:AlKr}. It suggests how Problem~\ref{prob:asymmetric} might behave for the symmetric case $\Delta_A=\Delta_B=\Delta$.
The complete bipartite graphs demonstrate its hypothetical sharpness up to a constant factor for the entire range of possibilities for $k_A$, and thus symmetrically $k_B$ (Theorem~\ref{thm:completecomparison}\ref{itm:symmetricasymmetriccomplete,nec}).
Theorem~\ref{thm:general} provides the following partial progress towards Conjecture~\ref{conj:general} under condition~\ref{itm:symmetricasymmetric}. In fact, this constitutes significant (asymmetric) progress towards Conjecture~\ref{conj:AlKr}, and is a first concrete step in this longstanding problem.

\begin{corollary}\label{cor:main}
	Given $\eps>0$, any bipartite graph $G=(A \cup B, E)$ with parts $A$ and $B$ having maximum degree at most $\Delta$ is 
	$((1+\eps) \Delta/\log_4 \Delta, 2 )$- and $((1+\eps) \Delta/\log \Delta, \log \Delta  )$-choosable
	for all $\Delta$ large enough.
\end{corollary}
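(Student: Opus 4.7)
The plan is to derive both claims by direct substitution into condition~\ref{itm:coupon} of Theorem~\ref{thm:general} (not with the roles exchanged), specialised to $\Delta_A=\Delta_B=\Delta$ and the stated values of $k_A$ and $k_B$. No new combinatorial ingredient is needed.

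I would first flag the point that motivates the specific pairs $(k_A,k_B)$: both are tuned so that the inner quantity $(1-1/k_B)^{\Delta_A\min\{1,k_B/k_A\}}$ appearing in condition~\ref{itm:coupon} equals $\Theta(\Delta^{-1/(1+\eps)})$. For $(k_A,k_B)=((1+\eps)\Delta/\log_4\Delta,\,2)$, one has $k_B/k_A<1$ for $\Delta$ large, the exponent collapses to $2\Delta/k_A=\log_2\Delta/(1+\eps)$, and the inner factor is exactly $(1/2)^{\log_2\Delta/(1+\eps)}=\Delta^{-1/(1+\eps)}$. For $(k_A,k_B)=((1+\eps)\Delta/\log\Delta,\,\log\Delta)$, the exponent is $\log^2\Delta/(1+\eps)$, and the standard limit $(1-1/\log\Delta)^{\log\Delta}\to 1/e$ gives an inner factor equal to $\Delta^{-1/(1+\eps)}$ up to an $O(1)$ factor.

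Writing $p$ for the inner factor, in both cases $k_A=\Theta(\Delta/\log\Delta)$, so
\[
e(\Delta(\Delta-1)+1)(1-p)^{k_A}\le e\Delta^2\exp\bigl(-\Theta(k_A p)\bigr)=e\Delta^2\exp\bigl(-\Theta(\Delta^{\eps/(1+\eps)}/\log\Delta)\bigr),
\]
which is at most $1$ for $\Delta$ large enough. This verifies condition~\ref{itm:coupon}, and Theorem~\ref{thm:general} yields both choosability statements.

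No substantial obstacle is expected: the whole proof is substitution and asymptotic check. The key point is that the $\eps$ slack in $k_A$ is precisely what produces the positive exponent $\eps/(1+\eps)$ in the double-exponential decay, comfortably beating the $e\Delta^2$ prefactor. The only mild subtlety is the $O(1)$ factor arising in the second case from the $(1-1/\log\Delta)^{\log\Delta}$ approximation, which is harmlessly absorbed in the $\Theta(\cdot)$ at the final step.
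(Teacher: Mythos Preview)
Your proposal is correct and follows exactly the same approach as the paper: the paper's proof is a one-line remark that the result follows for $\Delta$ large enough from condition~\ref{itm:coupon} of Theorem~\ref{thm:general} with $\Delta_A=\Delta_B=\Delta$ and the two stated pairs $(k_A,k_B)$. You have simply unpacked the asymptotic verification that the paper leaves implicit, and your computations of the inner factor $p=\Theta(\Delta^{-1/(1+\eps)})$ and the resulting bound $e\Delta^2\exp\bigl(-\Theta(\Delta^{\eps/(1+\eps)}/\log\Delta)\bigr)\le 1$ are accurate.
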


\begin{proof}
	This follows for $\Delta$ large enough from condition~\ref{itm:coupon} in Theorem~\ref{thm:general} with $\Delta_A=\Delta_B =\Delta$ and either $k_A= (1+\eps) \Delta/\log_4 \Delta$ and $k_B = 2$ or $k_A= (1+\eps) \Delta/\log \Delta$ and $k_B = \log \Delta$.
\end{proof}

In Section~\ref{sec:boundary} we are able to nearly completely characterise $(k_A,k_B)$-choosability for complete bipartite graphs if $k_A\ge \Delta_A-1$.
In Section~\ref{sec:degrees}, we give a connection between minimum degree and $(k_A,k_B)$-choosability along the lines of~\cite{Alo00}.

\subsection*{Probabilistic preliminaries}

We will use the following standard probabilistic tools, cf.~\cite{AlSp16}.

\begin{chernoff}
For $X\sim \Bin(n,p)$ and $\eps \in [0,1]$, $\Pr(X < (1-\eps)np) \le \exp(-\frac{\eps^2}{2}np)$.
\end{chernoff}

\begin{slll}
Consider a set $\cal E$ of (bad) events such that for each $A\in \cal E$
\begin{enumerate}
\item $\Pr(A) \le p < 1$, and
\item $A$ is mutually independent of a set of all but at most $d$ of the other events.
\end{enumerate}
If $ep(d+1)\le1$, then with positive probability none of the events in $\cal E$ occur.
\end{slll}

%\subsection*{Outline of the paper}

\section{A sufficient condition via transversals}\label{sec:transversals}

In this section, we prove Theorem~\ref{thm:general} under condition~\ref{itm:transversals}.
We prefer to state and prove a slightly stronger form. To do so, we need some notational setup.
Let $H=(V,E)$ be a hypergraph. The {\em degree} of a vertex in $H$ is the number of edges containing it. 
Given some partition of $V$, a {\em transversal} of $H$ is a subset of $V$ that intersects each part in exactly one vertex. A transversal of $H$ is called {\em independent} if it contains no edge, cf.~\cite{EGL94}.

\begin{lemma}\label{thm:transversal} Fix $k\ge 2$.
Let $H$ be a $k$-uniform vertex-partitioned hypergraph, each part being of size $\ell$, such that every part has degree sum at most $\Delta$.  If $\ell^k \ge e(k(\Delta-1)+1)$, then $H$ has an independent transversal.
\end{lemma}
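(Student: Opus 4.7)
The plan is to apply the Lovász Local Lemma to a uniformly random transversal. Concretely, for each part $P$ of $H$ (of size $\ell$) I pick one vertex uniformly and independently; call the resulting random transversal $T$. For each edge $e\in E(H)$ whose $k$ vertices lie in $k$ pairwise distinct parts, let $A_e$ be the bad event that $T\supseteq e$; edges with two vertices in a common part are automatically avoided by $T$ and can be discarded. Then $\Pr(A_e)=1/\ell^k$ since the $k$ choices are independent and each fixes one vertex in $\ell$.

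Next I need the dependency structure. The event $A_e$ is determined entirely by the random choices made in the $k$ parts that $e$ meets, so $A_e$ is mutually independent of the family of all $A_{e'}$ for which $e'$ shares no part with $e$. The key step is therefore to bound the number of edges $e'\neq e$ that do share a part with $e$. Here I will use the degree-sum hypothesis: summing the degree sums of the $k$ parts met by $e$ gives at most $k\Delta$, and $e$ itself contributes exactly $k$ to this total (one per part). Hence the number of incidences from edges other than $e$ in these $k$ parts is at most $k(\Delta-1)$, which upper bounds the number of distinct edges $e'\neq e$ sharing a part with $e$. Thus we may take $d\le k(\Delta-1)$ in LLL.

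Plugging into the symmetric Lovász Local Lemma, the hypothesis $ep(d+1)\le 1$ becomes
\[
e\cdot\frac{1}{\ell^k}\cdot\bigl(k(\Delta-1)+1\bigr)\le 1,
\]
which is exactly the assumption $\ell^k\ge e(k(\Delta-1)+1)$. The LLL then yields a positive probability that no $A_e$ occurs, i.e.\ that $T$ contains no edge of $H$, so $T$ is an independent transversal.

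There is no genuine obstacle; the only point that deserves a sentence of care is the dependency count, because a naive application of ``each part has at most $\Delta$ incident edges'' would give $d\le k\Delta$ rather than $d\le k(\Delta-1)$, and the sharper bound (matching the form of the hypothesis) is what allows the LLL inequality to align exactly with the stated condition $\ell^k\ge e(k(\Delta-1)+1)$.
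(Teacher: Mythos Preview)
Your proof is correct and follows essentially the same approach as the paper: a uniformly random transversal together with the symmetric Lov\'asz Local Lemma, with bad events indexed by edges and dependency degree $d\le k(\Delta-1)$. Your explicit justification of the bound $d\le k(\Delta-1)$ via counting incidences is a welcome clarification that the paper leaves implicit.
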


\noindent
%Note that a sharp form of Lemma~\ref{thm:transversal} in the case $k=2$ was shown by Haxell~\cite{Hax01}.
%An improvement upon the constant $e$ seems eminently likely, though not below $1$ in general. 
Let us first show that Lemma~\ref{thm:transversal} implies Theorem~\ref{thm:general} under condition~\ref{itm:transversals}.

\begin{proof}[Proof of Theorem~\ref{thm:general} under condition~\ref{itm:transversals}]
Let $k_A,k_B,\Delta_A,\Delta_B$ satisfy condition~\ref{itm:transversals}. Let $L$ be a $(k_A,k_B)$-list-assignment of $G$. We would like to show that there is a proper $L$-colouring of $G$. We do so by defining a suitable hypergraph $H=(V_H,E_H)$.

Let $(w,c)$ be a vertex of $V_H$ if $w\in B$ and $c\in L(w)$.

Let $((w_1,c_1),\dots,(w_{k_A},c_{k_A}))$ be an edge of $E_H$ whenever there is some $v\in A$ such that $N(v) \supseteq \{w_1,\dots,w_{k_A}\}$ and $L(v) = \{c_1,\dots,c_{k_A}\}$.

Note that $H$ is a $k_A$-uniform vertex-partitioned hypergraph, where the parts are naturally induced by each list in $B$ and so are each of size $k_B$. We have defined $H$ and its partition so that any independent transversal corresponds to a special partial $L$-colouring of $G$. In particular, it is an $L$-colouring of the vertices in $B$ for which there is guaranteed to be a colour in $L(v)$ available for every $v\in A$, and so it can be extended to a proper $L$-colouring of all $G$.

Every part in $H$ has degree sum at most
\(
 \Delta_B \binom{ \Delta_A -1}{ k_A -1} k_A! \le 
% \Delta_B \Delta_A!/(\Delta_A-k_A)!\le
 \Delta_B \Delta_A^{k_A},
\)
so the result follows from Lemma~\ref{thm:transversal} with $\ell=k_B$ and $\Delta=\Delta_B \Delta_A^{k_A}$.
\end{proof}

\begin{proof}[Proof of Lemma~\ref{thm:transversal}]
Write $H=(V,E)$ and suppose  
$\ell^k \ge e(k(\Delta-1)+1)$. 
Consider the random transversal $\mathbf{T}$ formed by choosing one vertex from each part independently and uniformly. For each edge $f\in E$, let $A_f$ denote the event that $\mathbf{T}\supseteq f$. Note that $\Pr(A_f) \le 1/\ell^k$. Moreover $A_f$ is mutually independent of a set of all but at most 
$k(\Delta-1)$
of the other events $A_{f'}$. The transversal $\mathbf{T}$ is independent if none of the events $A_f$ occur. Since by assumption $e (1/\ell^k)(k(\Delta-1)+1) \le 1$, there is a positive probability that $\mathbf{T}$ is independent by the Lov\'asz Local Lemma.
\end{proof}

\section{A sufficient condition via coupon collection}\label{sec:coupon}

\begin{proof}[Proof of Theorem~\ref{thm:general} under condition~\ref{itm:coupon}]
Let $k_A,k_B,\Delta_A,\Delta_B$ satisfy condition~\ref{itm:coupon}. Let $L$ be a $(k_A,k_B)$-list-assignment of $G$. We would like to show that there is a proper $L$-colouring of $G$.
	To this end, colour each vertex $w\in B$, randomly and independently, by a colour chosen uniformly from its list $L(w)$.
	Let $T_{v,c}$ be the event that $v \in A$ has a neighbour coloured with colour $c$.
	Let $T_v$ be the event that $T_{v,c}$ happens for all $c\in L(v)$.
	The proof hinges on the following claim, which is related to the coupon collector problem, cf.~e.g.~\cite{Doe20}.

\begin{claim}
	The events $T_{v,c}$, for fixed $v$ as $c$ ranges over all colours in $L(v)$, are negatively correlated. 
	In particular,
	\(\Pr(T_v) \le \prod_{c\in L(v)} \Pr(T_{v,c})\).
\end{claim}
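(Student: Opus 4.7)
The plan is to give a self-contained induction on the number of neighbours of $v$ processed so far.  Fix an arbitrary enumeration $w_1,\dots,w_d$ of $N(v)$, with $d=|N(v)|$.  For $0\le i\le d$ and $S\subseteq L(v)$, let $W_i(S)$ denote the probability that every colour in $S$ appears among the (independently chosen) colours $X_{w_1},\dots,X_{w_i}$.  Since $\Pr(T_v)=W_d(L(v))$ and $\Pr(T_{v,c})=W_d(\{c\})$, it suffices---and in fact I shall prove the full negative correlation statement---that
\[
W_i(S) \;\le\; \prod_{c\in S} W_i(\{c\})
\qquad\text{for every } S\subseteq L(v) \text{ and every } 0\le i\le d,
\]
by induction on $i$.

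The base case $i=0$ is trivial since $W_0(S)=0$ whenever $|S|\ge 1$.  For the inductive step, conditioning on the colour $X_{w_i}$ (and splitting its $|L(w_i)|$ equally likely values according to whether they lie in $S$) yields
\[
W_i(S) \;=\; \sum_{c'\in L(w_i)\cap S} \frac{W_{i-1}(S\setminus\{c'\})}{|L(w_i)|}
+ \left(1-\frac{|L(w_i)\cap S|}{|L(w_i)|}\right) W_{i-1}(S),
\]
and analogously $W_i(\{c\})= p_c+\gamma_c$, where $p_c := W_{i-1}(\{c\})$ and $\gamma_c := \mathbf{1}[c\in L(w_i)]\,(1-p_c)/|L(w_i)|\ge 0$.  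Substituting the induction hypothesis $W_{i-1}(T)\le \prod_{c\in T} p_c$ into the right-hand side of the displayed equation and collecting terms gives, after a short rearrangement,
\[
W_i(S) \;\le\; \prod_{c\in S} p_c \;+\; \sum_{c'\in S}\gamma_{c'}\prod_{c\in S\setminus\{c'\}} p_c.
\]

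Finally, expanding $\prod_{c\in S} W_i(\{c\})=\prod_{c\in S}(p_c+\gamma_c)$ over all subsets of $S$ and keeping only the two lowest-order terms---which is valid because every $\gamma_c\ge 0$---gives exactly the upper bound above, closing the induction.  The only obstacle is the bookkeeping around the two cases $c'\in S$ and $c'\notin S$ in the conditioning step; once that is set up, the inequality reduces to the elementary $\prod_{c\in S}(p_c+\gamma_c)\ge \prod_{c\in S} p_c+\sum_{c'\in S}\gamma_{c'}\prod_{c\ne c'} p_c$, so there is no real analytic difficulty.
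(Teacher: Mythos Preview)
Your argument is correct and follows a genuinely different route from the paper's. The paper proceeds by induction on the size of the colour set $I\subseteq L(v)$: it adds one colour $c'$ at a time and uses the inequality
\[
\Pr(\forall c\in I:\,T_{v,c})\;\le\;\Pr(\forall c\in I:\,T_{v,c}\mid \lnot T_{v,c'}),
\]
justified by the (implicit) coupling observation that deleting $c'$ from all neighbouring lists can only enlarge the set of colours in $I$ that appear. Your proof instead reveals the neighbours of $v$ one at a time and inducts on $i$, maintaining the stronger invariant $W_i(S)\le\prod_{c\in S}W_i(\{c\})$ simultaneously for all $S\subseteq L(v)$. The recurrence you write for $W_i(S)$ is correct (note that $1-|L(w_i)\cap S|/|L(w_i)|\ge 0$, so there is no sign issue when you apply the induction hypothesis to the second summand), and the ``short rearrangement'' is exactly the identity
\[
(1-q|S_1|)\prod_{c\in S}p_c+\sum_{c'\in S_1}q\prod_{c\ne c'}p_c
\;=\;\prod_{c\in S}p_c+\sum_{c'\in S}\gamma_{c'}\prod_{c\ne c'}p_c
\]
with $S_1=L(w_i)\cap S$ and $q=1/|L(w_i)|$. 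The trade-off is that the paper's argument is shorter and conceptually cleaner once one accepts the coupling step, whereas yours is entirely explicit and avoids that step at the cost of carrying a family of inequalities (one for each $S$) through the induction.
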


\begin{claimproof}
	We have to prove, for every $I \subset L(v)$, that
	$\Pr(\forall c \in I \colon T_{v,c}) \le \prod_{c\in I} \Pr(T_{v,c}).$
	If some colour $c \in L(v)$ is not in the list of any neighbour of $v$, both sides equal zero and so the inequality holds. So assume this is not the case.
	We prove the statement by induction on $\lvert I \rvert.$ When $\lvert I \rvert\le 1$ the statement is trivially true.
	Let $I \subset L(v)$ be a subset for which the statement is true and let $c' \in L(v) \setminus I.$ We now prove the statement for $I'=I \cup \{c'\}.$
	We have $\Pr(\forall c \in I \colon T_{v,c}) \le \Pr(\forall c \in I \colon T_{v,c} \mid \lnot T_{v,c'} )$ as the probability to use a colour in $I$ is larger if in all neighbouring lists the colour $c'$ is removed.
	This is equivalent to 
	\begin{align*}
	\Pr(\forall c \in I \colon T_{v,c}) &\ge \Pr(\forall c \in I \colon T_{v,c} \mid T_{v,c'} )\\
	\iff\Pr(\forall c \in I' \colon T_{v,c}) &\le 	\Pr(\forall c \in I \colon T_{v,c}) \Pr(T_{v,c'})
	\end{align*}
	This last expression is at most $ \prod_{c\in I'} \Pr(T_{v,c})$ by the induction hypothesis, as desired.
\end{claimproof}

For the $i^{\text{th}}$ colour $c_i$ in $L(v)$, let the number of occurrences of $c_i$ in the neighbouring lists of $v$ be $x_i$.
	Note that $\Pr(T_{v,c_i}) = 1- (1-1/k_B)^{x_i}.$
	Using  $x_i \le \Delta_A$ for every $1 \le i \le k_A$ and the claim, 
	 we have 
	$$\Pr(T_v) \le \left(1- \left(1-\frac{1}{k_B}\right)^{  \Delta_A} \right)^{k_A}.$$
	Noting that $\sum_{i=1}^{k_A} x_i \le k_B \Delta_A$ and that the function $\log (1- (1-1/k_B)^{x} )$ is concave and increasing, Jensen's Inequality applied with the claim  
	implies that 
	$$\Pr(T_v) \le \left(1- \left(1-\frac{1}{k_B}\right)^{ k_B \Delta_A/k_A} \right)^{k_A}.$$
	Each event $T_v$ is mutually independent of all other events $T_u$ apart from those corresponding to vertices $u\in A$ that have a common neighbour with $v$ in $G$. As there are at most $\Delta_A(\Delta_B-1)$ such vertices besides $v$, the Lov\'asz Local Lemma guarantees with positive probability that none of the events $T_v$ occur, i.e.~there is a proper $L$-colouring, as desired.
\end{proof}

\section{The complete case and Steiner systems}\label{sec:completesteiner}

In this section, we investigate general necessary conditions for $(k_A,k_B)$-choosability via the complete bipartite graphs.
Inspired in part by Theorem~\ref{thm:ERT} and related work of Bonamy and the third author~\cite{BoKa17}, this leads naturally to the study of an extremal set theoretic parameter.
For positive integers $k_1,k_2,\ell$, we say that a family ${\mathcal F}$ of $k_2$-element subsets of $[\ell]$ has {\em Property~A$(k_1,k_2,\ell)$} (A is for asymmetric) if there is a $k_1$-element subset of $[\ell]$ that intersects every set in ${\mathcal F}$.
We then define $\overline{M}(k_1,k_2,\ell)$ to be the cardinality of a smallest family of $k_2$-element subsets of $[\ell]$ that does not have Property~A$(k_1,k_2,\ell)$. 
Note that this definition of $\overline{M}$ coincides with the definition given before the statement of Theorem~\ref{thm:completesteiner}.

%This generalises a parameter used by Bonamy and the third author in~\cite{BoKa17} to study another refinement of list colouring.

\begin{proof}[Proof of Theorem~\ref{thm:completesteiner}]

	We define a $(k_A,k_B)$-list-assignment $L$ as follows. 
	Let ${\mathcal F}_1$ be a family of $\overline{M}(k_1,k_A,\ell)$ $k_A$-element subsets of $[\ell]$ without Property~A$(k_1,k_A,\ell)$.
	Let ${\mathcal F}_2$ be a family of $\overline{M}(k_2,k_B,\ell)$ $k_B$-element subsets of $[\ell]$ without Property~A$(k_2,k_B,\ell)$.
	 Assign the sets of ${\mathcal F}_1$ as lists to the vertices in $A$ and the sets of ${\mathcal F}_2$ as lists to the vertices in $B$. 
	 Suppose that $c$ is an $L$-colouring. Then $C_1=\{c(a) \mid a\in A\}$ intersects every set in ${\mathcal F_1}$ and so $|C_1|\ge k_1+1$ by assumption and similarly $C_2=\{c(b) \mid b\in B\}$ has cardinality $|C_2|\ge k_2+1$.
	 So $|C_1|+|C_2|\ge k_1+k_2+2 > \ell$, implying that $c$ cannot be a proper colouring.%, from which the result follows.
\end{proof}

Since $\overline M(\ell-k_A,k_A,\ell)=\binom{\ell}{k_A}$, we have the following corollary of Theorem~\ref{thm:completesteiner}.
\begin{corollary}\label{cor:completesteiner}
Let $k_A,k_B,\ell$ be integers such that $k_A,k_B\ge 2$ and $\ell \ge k_A+k_B-1$.
The complete bipartite graph $G=(V{=}A\cup B,E)$ with $|A|=\binom{\ell}{k_A}$ and $|B|=\overline{M}(k_A-1,k_B,\ell)$ is not $(k_A,k_B)$-choosable.
\end{corollary}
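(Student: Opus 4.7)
The plan is to derive Corollary~\ref{cor:completesteiner} as a direct instantiation of Theorem~\ref{thm:completesteiner}, choosing the parameters $(k_1, k_2)$ so that the corresponding $\overline{M}$ values collapse to the advertised quantities. Specifically, I would set $k_1 = \ell - k_A$ and $k_2 = k_A - 1$. Then $k_1 + k_2 + 1 = \ell$ as required by Theorem~\ref{thm:completesteiner}, and the hypothesis $\ell \ge k_A + k_B - 1$ guarantees $k_1 = \ell - k_A \ge k_B - 1 \ge 1$, so both $k_1, k_2$ are in the legitimate range (in particular both non-negative, and $\overline{M}(k_2, k_B, \ell) = \overline{M}(k_A - 1, k_B, \ell)$ is well-defined and finite because no single $(k_A-1)$-subset can intersect every $k_B$-subset when $\ell \ge k_A + k_B - 1$).

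The substantive remaining step is to verify the preparatory identity $\overline{M}(\ell - k_A, k_A, \ell) = \binom{\ell}{k_A}$. I would argue by complementation: an $(\ell - k_A)$-subset $S \subseteq [\ell]$ fails to meet a $k_A$-subset $T$ if and only if $T = [\ell] \setminus S$, since both sets have cardinality $k_A$. Consequently, $S$ intersects every member of a family $\mathcal{F}$ of $k_A$-subsets if and only if $[\ell] \setminus S \notin \mathcal{F}$. For $\mathcal{F}$ to fail Property~A$(\ell - k_A, k_A, \ell)$, every such $S$ must fail, i.e.\ every set $[\ell] \setminus S$ must lie in $\mathcal{F}$; as $S$ ranges over all $(\ell - k_A)$-subsets, its complement ranges over all $k_A$-subsets. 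Thus the unique such $\mathcal{F}$ is $\binom{[\ell]}{k_A}$ itself, giving $\overline{M}(\ell - k_A, k_A, \ell) = \binom{\ell}{k_A}$.

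With these two observations in hand, Theorem~\ref{thm:completesteiner} applied with $k_1 = \ell - k_A$ and $k_2 = k_A - 1$ yields a complete bipartite graph with $|A| = \overline{M}(\ell - k_A, k_A, \ell) = \binom{\ell}{k_A}$ and $|B| = \overline{M}(k_A - 1, k_B, \ell)$ that is not $(k_A, k_B)$-choosable, which is exactly the statement of the corollary. There is no real obstacle here beyond the bookkeeping on $(k_1, k_2)$ and the complementation identity for $\overline{M}$; the proof is essentially a parameter specialisation of the main theorem of the section.
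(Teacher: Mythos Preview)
Your proposal is correct and follows exactly the paper's approach: the paper derives the corollary from Theorem~\ref{thm:completesteiner} by noting the identity $\overline{M}(\ell-k_A,k_A,\ell)=\binom{\ell}{k_A}$, which amounts precisely to your choice $k_1=\ell-k_A$, $k_2=k_A-1$. You have supplied the (easy) complementation verification of that identity and the parameter-range checks that the paper leaves implicit, but otherwise the argument is identical.
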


\noindent
Although they are trivial, the following observations already show that Theorem~\ref{thm:completesteiner} and Corollary~\ref{cor:completesteiner} cannot be improved much in general. 
\begin{itemize}
\item
If $k_A=k_B=k$ and $\ell=2k-1$, then $\overline{M}(k_A-1,k_B,\ell)=\binom{2k-1}{k} =2^{2k+o(k)}$ and Corollary~\ref{cor:completesteiner} implies that $K_{\binom{2k-1}{k},\binom{2k-1}{k}}$ is not $k$-choosable. This implies $\ch(K_{n,n}) \gtrsim \frac12\log_2 n$, which one can compare to the bounds of Theorem~\ref{thm:ERT}.
\item
If $\ell=k_A\cdot k_B$, then $\overline{M}(k_A-1,k_B,\ell)=k_A$ and so Corollary~\ref{cor:completesteiner} implies that $K_{\binom{k_A\cdot k_B}{k_A},k_A}$ is not $(k_A,k_B)$-choosable.
This produces a necessary condition on $k_B$ for $(k_A,k_B)$-choosability only slightly weaker than Proposition~\ref{prop:classic}.
\end{itemize}

\noindent
We present several more instances where Theorem~\ref{thm:completesteiner} and Corollary~\ref{cor:completesteiner} are nearly sharp in Section~\ref{sec:asymptoticcomplete}.
For this, we will have use for the following estimates for the parameter $\overline{M}$. A version of this result can be found in~\cite[Ch.~13]{ES74}, but for completeness we present a standard derivation in the appendix.

\begin{theorem}\label{thm:PropA}
Let $k_1,k_2,\ell$ be integers such that $k_1,k_2\ge 2$ and $\ell\ge k_1+k_2$. Then
\begin{align*}
\frac{\ell!(\ell-k_1-k_2)!}{(\ell-k_2)!(\ell-k_1)!} \le \overline{M}(k_1,k_2,\ell) < \frac{\ell!(\ell-k_1-k_2)!}{(\ell-k_2)!(\ell-k_1)!}\log\binom{\ell}{k_1}.
\end{align*}
\end{theorem}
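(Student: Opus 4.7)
The plan is to first reformulate Property~A$(k_1,k_2,\ell)$ in dual/covering language: a family $\mathcal F$ of $k_2$-subsets of $[\ell]$ fails Property~A$(k_1,k_2,\ell)$ exactly when, for every $k_1$-subset $S\subseteq[\ell]$, some $F\in\mathcal F$ is disjoint from $S$, i.e.\ every $(\ell-k_1)$-subset of $[\ell]$ contains at least one member of $\mathcal F$. Thus $\overline M(k_1,k_2,\ell)$ is the minimum size of a $k_2$-uniform covering of all $(\ell-k_1)$-subsets. Both bounds fall out of this reformulation, and the expression $\tfrac{\ell!(\ell-k_1-k_2)!}{(\ell-k_2)!(\ell-k_1)!}$ should be rewritten as the ratio $\binom{\ell}{k_2}/\binom{\ell-k_1}{k_2}=\binom{\ell}{k_1}/\binom{\ell-k_2}{k_1}$, which will be the natural quantity appearing throughout.

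For the lower bound, I will use a simple double-counting argument. Any fixed $k_2$-set $F$ is contained in exactly $\binom{\ell-k_2}{k_1}$ subsets of size $\ell-k_1$ (choose the remaining $k_1$ elements from the $\ell-k_2$ vertices outside $F$). Since every one of the $\binom{\ell}{k_1}=\binom{\ell}{\ell-k_1}$ many $(\ell-k_1)$-subsets must contain some $F\in\mathcal F$, we get $|\mathcal F|\cdot\binom{\ell-k_2}{k_1}\ge\binom{\ell}{k_1}$, which rearranges to the claimed lower bound.

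For the upper bound, I will use the standard probabilistic covering argument. Choose $m$ $k_2$-subsets of $[\ell]$ independently and uniformly at random. For a fixed $(\ell-k_1)$-subset $T$, the probability that a single random $k_2$-set lies inside $T$ is $\binom{\ell-k_1}{k_2}/\binom{\ell}{k_2}$, so the probability that none of the $m$ trials lands inside $T$ is at most
\[
\left(1-\frac{\binom{\ell-k_1}{k_2}}{\binom{\ell}{k_2}}\right)^{m}\le \exp\!\left(-m\cdot\frac{\binom{\ell-k_1}{k_2}}{\binom{\ell}{k_2}}\right).
\]
A union bound over the $\binom{\ell}{k_1}$ many $(\ell-k_1)$-subsets shows that this random family covers them all with positive probability as soon as $m\ge \frac{\binom{\ell}{k_2}}{\binom{\ell-k_1}{k_2}}\log\binom{\ell}{k_1}$. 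Since $\binom{\ell}{k_2}/\binom{\ell-k_1}{k_2}$ equals exactly $\tfrac{\ell!(\ell-k_1-k_2)!}{(\ell-k_2)!(\ell-k_1)!}$, this gives the stated upper bound (with a small amount of care to turn the $\ge$ into a strict $<$, e.g.\ by taking the ceiling and noting the union bound is strict unless it equals $1$).

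No step is really an obstacle; the only mild care is the bookkeeping to identify the ratio of binomial coefficients with the asymmetric factorial expression in the statement, and the strict-versus-weak inequality for the upper bound. Everything else is the classical counting/probabilistic covering pair.
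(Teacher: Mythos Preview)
Your proof is correct and follows essentially the same approach as the paper's. The only cosmetic difference is in the lower bound: you phrase it as a straightforward double count over incidences between $k_2$-sets and $(\ell-k_1)$-supersets, whereas the paper picks a uniformly random $k_1$-set $C$ and applies a union bound over $F\in\mathcal F$---but these are the same inequality in different clothing, and your upper-bound argument (random family plus union bound over the $\binom{\ell}{k_1}$ targets) is identical to the paper's.
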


\section{Sufficient conditions in the complete case}\label{sec:SufComplete}

In this section, we give general sufficient conditions for $(k_A, k_B)$-choosability of complete bipartite graphs $K_{a,b}.$
Our strategy for establishing $(k_A,k_B)$-choosability in this setting is to take a random bipartition of the set of all colours and try to use one part for colouring $A$ and the other part for colouring $B$. This yields the following lemma.

\begin{lemma}\label{lem:completeupper}
	Let the reals $0 < \eps, p  < 1$ and positive integers $a$, $b$, $k_A$, $k_B$ satisfy either
	\begin{align}
	a p^{k_A}+b(1-p)^{k_B}<1 &\text{ or}\label{eqn:completeupper1}\\
	\frac{ap^{k_A-1}}{(1-\eps)k_B} + b \exp{(- \eps^2 k_B p /2)}<1 &.\label{eqn:completeupper2}
	\end{align}
	Then the complete bipartite graph $G=(V{=}A\cup B,E)$ with $|A|=a$ and $|B|=b$ is $(k_A,k_B)$-choosable.
\end{lemma}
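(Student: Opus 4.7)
My plan is to treat both conditions via the same random-bipartition template. Fix a parameter $p \in (0,1)$ and, independently for each colour, place it in a set $X$ with probability $p$ and in $Y := \mathbb{Z}^+ \setminus X$ otherwise; I intend to colour $B$ from $X$ and $A$ from $Y$. Because $X \cap Y = \emptyset$ and $K_{a,b}$ is complete bipartite, any colouring of this shape is automatically conflict-free between the two sides, so it suffices to ensure each vertex has an available colour in its list on the appropriate side.

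For condition~\eqref{eqn:completeupper1} a direct union bound closes things. The vertex $v \in A$ is stuck exactly when $L(v) \subseteq X$, of probability $p^{k_A}$, and $w \in B$ is stuck exactly when $L(w) \subseteq Y$, of probability $(1-p)^{k_B}$. Summing over all $a+b$ vertices gives total failure probability at most $a p^{k_A} + b(1-p)^{k_B} < 1$, so a usable bipartition exists.

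For condition~\eqref{eqn:completeupper2} the $B$-side is sharpened via Chernoff. Since $|L(w) \cap X| \sim \Bin(k_B, p)$, one gets $\Pr(|L(w) \cap X| < (1-\eps) k_B p) \le \exp(-\eps^2 k_B p / 2)$, and a union bound over $B$ delivers the second term of~\eqref{eqn:completeupper2}. Condition on the good event that $|L(w) \cap X| \ge (1-\eps) k_B p$ for all $w \in B$, and colour each such $w$ by a uniformly random $c(w) \in L(w) \cap X$. A vertex $v \in A$ is now genuinely stuck only under the strictly stronger event $L(v) \subseteq \{c(w) : w \in B\}$, which forces $L(v) \subseteq X$. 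Aiming for a per-vertex bound of $p^{k_A-1}/((1-\eps) k_B)$, I would single out a distinguished colour $c^* \in L(v)$ and estimate $\Pr(v\text{ stuck})$ by the joint probability that $L(v) \setminus \{c^*\} \subseteq X$ (of probability exactly $p^{k_A-1}$) together with $c^* = c(w)$ for some $w \in B$, combining the $p$-cost of $c^* \in X$ with the $1/((1-\eps) k_B p)$ upper bound on the uniform choice inside $L(w) \cap X$ to produce the desired $1/((1-\eps) k_B)$ factor. Summing over $v \in A$ then yields the first term of~\eqref{eqn:completeupper2}.

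I expect the main technical obstacle to be exactly this last step on the $A$-side: a naive union bound over $w \in B$ for ``$c^* = c(w)$'' would introduce an unwanted factor of $b$ incompatible with the statement. Escaping that factor presumably requires exploiting that the events $\{c(w) = c^*\}$ are pairwise disjoint across $w$ (each $w$ takes only one colour) and that $c^* \in X$ happens with density only $p$, so that the costs $p$ and $1/((1-\eps)k_B p)$ should balance against the sum over $w$ in a tight rather than loose manner. This is the delicate accounting step I would scrutinise most carefully.
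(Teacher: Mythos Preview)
Your treatment of condition~\eqref{eqn:completeupper1} matches the paper exactly. Your handling of the $B$-side under condition~\eqref{eqn:completeupper2} via Chernoff is also the same. The gap is entirely in your $A$-side accounting under~\eqref{eqn:completeupper2}.

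The per-vertex bound you are aiming for, $\Pr(v\text{ stuck}) \le p^{k_A-1}/((1-\eps)k_B)$, is simply false in general, and your own closing paragraph essentially says why. First, the events $\{c(w)=c^*\}$ for different $w\in B$ are \emph{not} disjoint: nothing prevents several $w$ from all selecting the same colour $c^*$. Second, and more fundamentally, the probability that $c^*$ is used by \emph{some} $w$ depends on how many $B$-lists contain $c^*$, and this can be made essentially $1$. Concretely, take $k_A=2$, $k_B$ fixed, $p=1/2$, and let a single colour $c^*\in L(v)$ lie in every one of the $b$ lists $L(w)$; then conditional on $c^*\in X$ (probability $1/2$), the probability that no $w$ picks $c^*$ is at most $(1-1/k_B)^b\to 0$ as $b\to\infty$, so $\Pr(c^*\text{ used})\to 1/2$, not $O(1/k_B)$. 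There is no way to cancel the factor $b$ while working vertex by vertex on the $A$-side.

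The paper avoids this obstacle by abandoning any per-vertex bound on $A$. Instead it bounds the \emph{number} of failures: the expected count of $v\in A$ with $L(v)\cap L_A=\emptyset$ is $ap^{k_A}$, so by Markov's inequality the probability that this count reaches $(1-\eps)k_Bp$ is at most $ap^{k_A}/((1-\eps)k_Bp)=ap^{k_A-1}/((1-\eps)k_B)$, which is exactly the first term of~\eqref{eqn:completeupper2}. On the good event, there are fewer than $(1-\eps)k_Bp$ ``bad'' $A$-vertices; colour each of them arbitrarily from its list (possibly spending colours of $L_B$), colour every other $A$-vertex from $L_A$, and then each $w\in B$ still has $|L(w)\cap L_B|\ge(1-\eps)k_Bp$ colours, strictly more than the number of bad $A$-vertices, so it can avoid them all. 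The second randomisation you introduce (uniform $c(w)$ inside $L(w)\cap X$) is unnecessary and is what leads you into the $b$-dependence trap.
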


\begin{proof}
	Let $L$ be $(k_A,k_B)$-list-assignment of $G$. Let $U = \cup_{v\in V}L(v)$, i.e.~$U$ is the union of all colour lists.
	Define a random partition of $U$ into parts $L_A$ and $L_B$ as follows. For each colour in $U$, randomly and independently assign it to $L_B$ with probability $p$ and otherwise assign it to $L_A$.
	
	First assume~\eqref{eqn:completeupper1}.
	For a given vertex $v\in A$, the probability that $L(v)\cap L_A=\emptyset$ is $p^{k_A}$.
	For a given vertex $v\in B$, the probability that $L(v)\cap L_B=\emptyset$ is $(1-p)^{k_B} $. So the expected number of vertices $v$ which cannot be coloured with the corresponding list $L_A$ or $L_B$ is equal to $a p^{k_A}+b(1-p)^{k_B}.$
	So the probabilistic method then guarantees some choice of the parts $L_A$ and $L_B$ such that every vertex in $A$ can be coloured with a colour from $L_A$ and every vertex in $B$ with a colour from $L_B.$
	
	Otherwise assume~\eqref{eqn:completeupper2}.
	For a given vertex $w\in B$, the random variable $|L(w)\cap L_B|$ has a binomial distribution of parameters $k_B$ and $p$. So by a Chernoff bound the probability that $|L(w)\cap L_B|$ is smaller than $(1- \eps) k_B p $ is smaller than $\exp{(- \eps^2 k_B p /2)}$.
	Thus, with probability greater than $1-b \exp{(- \eps^2 k_B p /2)}$, we have $|L(w)\cap L_B| \ge (1- \eps) k_B p$ for all $w\in B$.
	
	For a given vertex $v\in A$, the probability that $L(v)\cap L_A=\emptyset$ is $p^{k_A}$. So the expected number of vertices $v$ for which this holds is 
	$p^{k_A}|A|$.
	Thus by Markov's inequality, the probability that there are at least $ (1- \eps) k_B p$ such vertices $v$ is smaller than $\frac{p^{k_A-1}a}{(1-\eps)k_B}$.
	
	Due to~\eqref{eqn:completeupper2}, the probabilistic method guarantees a fixed choice of partition of $U$ into parts $L_A$ and $L_B$ such that $|L(w)\cap L_B| \ge (1- \eps) k_B p$ for every $w\in B$ and the number of vertices $v\in A$ such that $L(v)\cap L_A= \emptyset$ is smaller than $(1- \eps) k_B p$. Colour any vertex $v\in A$ having $L(v)\cap L_A= \emptyset$ with an arbitrary colour from $L(v)$. Colour any other vertex $v'\in A$ with a colour from $L(v')\cap L_A$. Finally colour any vertex $w\in B$ with some colour in $L(w) \cap L_B$ unused by any vertex of $A$ --- this is possible as there were fewer than $(1- \eps) k_B p$ colours of $L_B$ used to colour vertices in $A$ and the lists in $B$ all have at least $(1- \eps) k_B p$ colours from $L_B$.
	
	In either case, we are guaranteed a proper $L$-colouring of $G$, as promised.
\end{proof}

\section{Asymptotic sharpness in the complete case}\label{sec:asymptoticcomplete}

We next use the results of Sections~\ref{sec:completesteiner} and~\ref{sec:SufComplete} to roughly settle the behaviour of complete bipartite graphs with respect to Problem~\ref{prob:asymmetric} in several regimes. 
The conditions in Theorems~\ref{thr:3CasesChoos} and~\ref{thm:completecomparison} naturally correspond to the conditions in Conjecture~\ref{conj:general}.

\begin{theorem}\label{thr:3CasesChoos}
	Let the positive integers $a$, $b$, $k_A$, $k_B$ satisfy one of the following.
	\begin{enumerate}
		\item\label{itm:uncrossedcomplete,suff}
		For any $\eps>0$, $a,b\ge\Delta_0$ for some $\Delta_0=\Delta_0(\eps)$, $k_A > b^\eps$ and $k_B > a^\eps$.
		\item\label{itm:crossedcomplete,suff}
		For any $t>0$, $k_A \ge \frac{1}{t}\log_2 (2a)$ and $k_B > 2^t \log (2b)$.
		\item\label{itm:symmetricasymmetriccomplete,suff}
		We have that $a=b=\Delta$ and either $k_B > 8 (\Delta/(2\log(2\Delta)))^{1/k_A}\log(2\Delta)$ or $k_A > 8 (\Delta/(2\log(2\Delta)))^{1/k_B}\log(2\Delta)$.
		%	\item\label{itm:steinercomplete,suff}
		%	For some absolute constant $C>0$ and any positive integer $q$, $a \le q^6/(80\log q)^2$, $b=q^2+q+1$, $k_A = 3$, $k_B=q^2$.
	\end{enumerate}
	Then the complete bipartite graph $G=(V{=}A\cup B,E)$ with $|A|=a$ and $|B|=b$ is $(k_A,k_B)$-choosable.
\end{theorem}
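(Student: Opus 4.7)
The plan is to deduce each of the three cases from Lemma~\ref{lem:completeupper}, in each instance by choosing an appropriate probability $p$ (and, in one case, the parameter $\eps$ appearing in~\eqref{eqn:completeupper2}).

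Case~\ref{itm:crossedcomplete,suff} is the most direct: I would invoke~\eqref{eqn:completeupper1} with $p = 2^{-t}$. The hypothesis $k_A \ge (1/t)\log_2(2a)$ is tuned exactly to give $ap^{k_A} \le 1/2$, while $k_B > 2^t\log(2b)$, together with $(1-p)^{k_B} \le \exp(-2^{-t}k_B)$, yields $b(1-p)^{k_B} < 1/2$, so the two terms sum to less than $1$.

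For case~\ref{itm:symmetricasymmetriccomplete,suff} I would apply~\eqref{eqn:completeupper2} with $\eps = 1/2$ and $p = 8\log(2\Delta)/k_B$; the assumption on $k_B$ forces $p < 1$ since $(\Delta/(2\log(2\Delta)))^{1/k_A} \ge 1$ for $\Delta$ not too small. A direct computation shows that the second term in~\eqref{eqn:completeupper2} is exactly $\Delta\exp(-\log(2\Delta)) = 1/2$, and the assumed lower bound on $k_B$ is just a rearrangement of the first term being strictly less than $1/2$. The symmetric alternative with the roles of $k_A$ and $k_B$ exchanged follows by swapping the roles of $A$ and $B$ in Lemma~\ref{lem:completeupper}.

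The main obstacle is case~\ref{itm:uncrossedcomplete,suff}, where $a$ and $b$ may be arbitrarily unbalanced. I would again use~\eqref{eqn:completeupper1}, but the choice of $p$ must now depend on both sizes. It suffices to prove $(2a)^{-1/k_A} + (2b)^{-1/k_B} > 1$, since then any $p$ strictly between $1 - (2b)^{-1/k_B}$ and $(2a)^{-1/k_A}$ simultaneously makes $ap^{k_A} < 1/2$ and $b(1-p)^{k_B} < 1/2$. Setting $x = \log(2a)/k_A$ and $y = \log(2b)/k_B$, and taking without loss of generality $a \le b$, the hypotheses give $x \le \log(2b)/b^\eps$ (which tends to $0$ as $b \to \infty$) and $y \le \log(2b)/a^\eps$. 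The required inequality $e^{-y} > 1 - e^{-x}$ then reduces, via $1 - e^{-x} \le x$, to something of the form $\log(2b)/a^\eps + \log\log(2b) < \eps \log b$, which is valid for $\Delta_0(\eps)$ sufficiently large because each term on the left is of strictly smaller order than $\log b$. Pinning down the right dependence of $\Delta_0$ on $\eps$ is the only delicate calculation in the proof.
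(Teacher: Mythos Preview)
Your proposal is correct and, for cases~\ref{itm:crossedcomplete,suff} and~\ref{itm:symmetricasymmetriccomplete,suff}, essentially identical to the paper's proof: the same choices of $p$ (namely $p=2^{-t}$, respectively $p=8\log(2\Delta)/k_B$ with $\eps=1/2$) and the same verifications of~\eqref{eqn:completeupper1} and~\eqref{eqn:completeupper2}. Your treatment of~\ref{itm:symmetricasymmetriccomplete,suff} is in fact slightly more careful than the paper's, which writes the second term as ``$<1/2$'' when it is exactly $1/2$; as you note, the strict inequality in the first term is what makes the sum strictly less than~$1$.

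For case~\ref{itm:uncrossedcomplete,suff} the two arguments differ mildly in packaging. The paper assumes without loss of generality $a\ge b$, sets $p=(2a)^{-1/b^{\eps}}$ explicitly, and checks~\eqref{eqn:completeupper1} directly from the hypotheses after fixing $\Delta_0$ so that $\eps\Delta_0^{\eps}>3$ and $x^{\eps/3}>\log(2x)$ for $x\ge\Delta_0$. You instead reduce to the existence statement $(2a)^{-1/k_A}+(2b)^{-1/k_B}>1$ and then pick any $p$ in the resulting non-empty interval; this is a clean and arguably more transparent reformulation. One phrasing issue: your claim that $\log(2b)/a^{\eps}$ is ``of strictly smaller order than $\log b$'' is not quite right, since with $a=\Delta_0$ fixed and $b\to\infty$ it is of the \emph{same} order with coefficient $1/\Delta_0^{\eps}$. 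What you actually need (and what your final sentence implicitly acknowledges) is that $\Delta_0^{\eps}$ exceeds a constant multiple of $1/\eps$, so that this coefficient is below $\eps$; together with $\log\log(2b)=o(\log b)$ for $b\ge\Delta_0$ large, your target inequality $\log(2b)/a^{\eps}+\log\log(2b)<\eps\log b$ then follows. With that adjustment the argument is complete.
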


\begin{proof}	
	Assume condition~\ref{itm:uncrossedcomplete,suff} and assume without loss of generality that $a \ge b$.
	Fix $\eps>0$ and take $\Delta_0>2$ large enough such that $\eps\Delta_0^\eps  > 3$ and $x^{\eps/3}> \log (2 x)$ for every $x \ge \Delta_0.$
	Now $k_B \ge a^{\eps}> a^{2/b^\eps} a^{\eps/3} > (2a)^{1/b^\eps} \log (2b)$. By taking $p= (2a)^{-1/b^\eps}$, we have that~\eqref{eqn:completeupper1} is satisfied and the result follows from Lemma~\ref{lem:completeupper}.
	
	Assume condition~\ref{itm:crossedcomplete,suff}. Take $p = 1/2^t$. Then $a p^{k_A} \le 1/2$ and $b (1-p)^{k_B} \le b \exp(-p k_B) < 1/2$, so that~\eqref{eqn:completeupper1} is satisfied and the result follows from Lemma~\ref{lem:completeupper}.

	Assume condition~\ref{itm:symmetricasymmetriccomplete,suff} and by symmetry assume $k_B > 8 (\Delta/(2\log(2\Delta)))^{1/k_A}\log(2\Delta)$.
	Take $p = 8\log (2\Delta)/k_B$ and $\eps=1/2$. 
	Then $p<(2 \log(2\Delta) / \Delta)^{1/k_A}$ and
	$b \exp{(- \eps^2 k_B p /2)} < 1/2$. Also
	\begin{align*}
	\frac{ap^{k_A}}{(1-\eps)pk_B} = \frac{\Delta p^{k_A}}{4\log(2\Delta)} < \frac12.
	\end{align*}
	Thus~\eqref{eqn:completeupper2} is satisfied and the result follows from Lemma~\ref{lem:completeupper}.
	%
	%
	%Assume condition~\ref{itm:steinercomplete,suff}. Without loss of generality, we may assume $q\ge 2$. Take $p= \frac{40}{q^2}\log q$ and $\eps=1/2$. Then $b \exp{(- \eps^2 k_B p /2)} \le 3q^{-3} < 1/2$ and
	%\begin{align*}
	%\frac{ap^{k_A}}{(1-\eps)pk_B} \le \frac{q^6 p^3}{2(40\log q)^3} = \frac12.
	%\end{align*}
	%We have that~\eqref{eqn:completeupper2} is satisfied and the result follows from Lemma~\ref{lem:completeupper}.
\end{proof}

The conditions are also sharp in some sense.

\begin{theorem}\label{thm:completecomparison}
For each of the following four conditions, there are infinitely many choices of the positive integers $a$, $b$, $k_A$, $k_B$ satisfying it such that
 the complete bipartite graph $G=(V{=}A\cup B,E)$ with $|A|=a$ and $|B|=b$ is not $(k_A,k_B)$-choosable.

\begin{enumerate}
\item\label{itm:uncrossedcomplete,nec}
Given a monotone real function $g$ satisfying $g(1) \ge 1$ and $g(x)=\omega(1)$ as $x\to\infty$, 
\begin{align*}
k_A > b^{1/g(b)} \text{ and } k_B=a.
\end{align*}
\item\label{itm:crossedcomplete,nec}
For any integer $t\ge 4$, either
\begin{align*}
k_A \le \frac{ \log_2 a }{t} \text{ and } k_B \le  \frac{2^{t-1}}{e} (\log{b} - \log{  \log{a}}) \le \frac{ \log_2 a }{t},
%\min\left\{\frac{ \log_2 a }{t} , \frac{2^{t-1}}{e} (\log{b} - \log{  \log{a}}) \right\},
\end{align*}
or
\begin{align*}
k_A \le 2^t \log_2 a  \text{ and } k_B \le \frac{\log_2{b}- \log_2{  \log{a}}}{t+\log_2 t + 3},
\end{align*}
where $4 \le b \le a$. 
\item\label{itm:symmetricasymmetriccomplete,nec}
For any fixed integer $k>1$ and $\Delta$ sufficiently large, $\Delta_A=\Delta_B=\Delta$, $k_A=k$ and $k_B= c (\Delta/\log \Delta )^{1/k} \log \Delta $ for some constant $c=c(k)$.
%\item\label{itm:steinercomplete,nec}
%$a = \frac16 q^3 (q + 1) (q^2 + q + 1)$, $b= q^2+q+1$, $k_A = 3$, $k_B=q^2$.
\end{enumerate}

\end{theorem}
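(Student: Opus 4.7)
The overall plan for all three parts is to apply Theorem~\ref{thm:completesteiner} after choosing auxiliary parameters $(\ell, k_1, k_2)$ tailored to the target conditions, and then to estimate the resulting sizes $a = \overline{M}(k_1, k_A, \ell)$ and $b = \overline{M}(k_2, k_B, \ell)$ via Theorem~\ref{thm:PropA}. Non-choosability of the resulting complete bipartite graph is automatic from Theorem~\ref{thm:completesteiner}; the work lies entirely in matching the bounds on $a$ and $b$ against the stated inequalities on $k_A$ and $k_B$.

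For part~\ref{itm:uncrossedcomplete,nec}, the simplest regime, I will take $\ell = 2k_A$, $k_1 = 1$, and $k_2 = 2k_A - 2$. By inspection, $\overline{M}(1, k_A, 2k_A) = 2$ (take two complementary $k_A$-subsets of $[2k_A]$), and $\overline{M}(2k_A - 2, 2, 2k_A) = \binom{2k_A}{2}$ (for each $(2k_A - 2)$-subset $T$, the only 2-subset disjoint from $T$ is $[2k_A]\setminus T$, forcing every pair into the family). Theorem~\ref{thm:completesteiner} then produces a non-$(k_A, 2)$-choosable $K_{2, \binom{2k_A}{2}}$. Setting $a = k_B = 2$ and $b = \binom{2k_A}{2}$, the equality $k_B = a$ holds trivially, and $b^{1/g(b)} \to 1$ as $k_A \to \infty$ since $g(b) = \omega(1)$; hence $k_A > b^{1/g(b)}$ for all sufficiently large $k_A$, yielding infinitely many examples.

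For part~\ref{itm:crossedcomplete,nec}, for each $t \ge 4$ I plan two families of constructions. For the first sub-condition (small $k_A$, moderate $k_B$), I will set $k_1 = \ell - k_A$ so that $a = \binom{\ell}{k_A}$, and $k_2 = k_A - 1$, with $\ell \approx 2^t k_A$. The standard estimate $\log_2 \binom{2^t k_A}{k_A} \approx k_A(t + \log_2 e)$ enforces $k_A \le \log_2 a / t$ for $t \ge 4$. Applying Theorem~\ref{thm:PropA} to $b = \overline{M}(k_A - 1, k_B, \ell)$, the binomial ratio $\binom{\ell}{k_B}/\binom{\ell - k_A + 1}{k_B}$ approximates $\exp(k_B/2^t)$ and the logarithmic correction contributes a factor of order $\log \binom{\ell}{\ell - k_A} = \log a$; inverting this relationship yields the bound $k_B \le (2^{t-1}/e)(\log b - \log\log a)$. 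For the second sub-condition (moderate $k_A$, small $k_B$), I will swap the roles, taking $k_2 = \ell - k_B$ (so $b = \binom{\ell}{k_B}$) and $k_1 = k_B - 1$; an analogous binomial computation, with $\log_2 b$ linearising in $k_B$ and the $\log \binom{\ell}{k_1}$-correction absorbed, produces the denominator factor $t + \log_2 t + 3$. Part~\ref{itm:symmetricasymmetriccomplete,nec} proceeds similarly: with $k_A = k$ fixed and $a = b = \Delta$ (allowed by vertex padding, since adding vertices with arbitrary lists of the correct size preserves non-choosability of the induced sub-instance), I will choose $\ell \approx (\Delta/\log\Delta)^{1/k}\log\Delta$ and $k_1, k_2$ suitably balanced, so that both $\overline{M}$-values fall below $\Delta$ while $k_B$ hits the target asymptotic up to a constant $c = c(k)$ emerging from the entropy-type bound.

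The main obstacle is the constant-tracking in parts~\ref{itm:crossedcomplete,nec} and~\ref{itm:symmetricasymmetriccomplete,nec}: translating the two-sided bounds of Theorem~\ref{thm:PropA} into the explicit forms $2^{t-1}/e$, $t + \log_2 t + 3$, and $c(k)$ requires careful control of both the leading exponential behaviour of the binomial ratio $\binom{\ell}{k_B}/\binom{\ell - k_1}{k_B}$ and the $\log \binom{\ell}{k_1}$-correction, and it is this bookkeeping, rather than any conceptual step, that carries the real technical weight. Part~\ref{itm:uncrossedcomplete,nec} avoids this difficulty entirely since both $\overline{M}$-values there are computed exactly by a trivial covering argument.
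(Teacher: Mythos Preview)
For parts~\ref{itm:uncrossedcomplete,nec} and~\ref{itm:crossedcomplete,nec} your plan is sound and essentially matches the paper. Two small remarks. For~\ref{itm:uncrossedcomplete,nec} the paper simply invokes Proposition~\ref{prop:classic} rather than Theorem~\ref{thm:completesteiner}, which is quicker; and your justification ``$b^{1/g(b)}\to 1$ since $g(b)=\omega(1)$'' is false as stated (take $g(x)=\log\log x$), though the desired inequality $k_A>b^{1/g(b)}$ does hold because $b\asymp k_A^2$, so one only needs $g(b)>2$. For~\ref{itm:crossedcomplete,nec} your outline via Corollary~\ref{cor:completesteiner} and the binomial-ratio estimates of Theorem~\ref{thm:PropA} is exactly the paper's argument; the constants you flag will indeed emerge from that bookkeeping.

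The genuine gap is in part~\ref{itm:symmetricasymmetriccomplete,nec}. Using Theorem~\ref{thm:completesteiner} together with the upper bound of Theorem~\ref{thm:PropA} on both sides cannot reach $k_B=c(\Delta/\log\Delta)^{1/k}\log\Delta$. To allow $k_B$ of order $\ell$ you must take $k_1$ close to $\ell$, and then for any admissible $k_2=\ell-k_1-1$ the bound
\[
\overline{M}(k_1,k,\ell)\;<\;\frac{\binom{\ell}{k}}{\binom{k_2+1}{k}}\,\log\binom{\ell}{k_1}
\]
carries the factor $\log\binom{\ell}{k_1}=\log\binom{\ell}{k_2+1}$, which in the relevant regime is of order $(\log\ell)^2$. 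This forces $\ell\lesssim\Delta^{1/k}(\log\Delta)^{(k-2)/k}$, a factor $(\log\Delta)^{1/k}$ short of the target; no rebalancing of $k_1,k_2$ recovers it. The paper sidesteps this loss by \emph{not} appealing to Theorem~\ref{thm:PropA} on the side with list size $k$: instead it partitions $[m]$ into about $(\log m)/(2(k-1))$ equal segments and takes as lists all $k$-subsets lying inside a single segment. This explicit family has the needed property (any $L$-colouring of that part avoids at most $(\log m)/2$ colours of $[m]$) while its cardinality stays below $\Delta$ by a direct count, with no logarithmic penalty. That segment construction is the missing idea in your plan.
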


\begin{proof}
Consider condition~\ref{itm:uncrossedcomplete,nec}.
Take $\delta$ large enough so that $g(\delta^a)>a$.
Let $k_A=\delta$ and $b=\delta^a$ and note that $b^{1/g(b)}=\delta^{ a/g(b)} < \delta=k_A$, i.e.~$k_A^{g(b)}> b$. The result then follows from Proposition~\ref{prop:classic}. 

Consider condition~\ref{itm:crossedcomplete,nec}.
For the former case, let
\begin{align*}
\ell= \frac{2^t}{et}\log_2 a
\quad\text{and}\quad
k_A = \frac{ \log_2 a }{t} \ge k_B = \frac{2^{t-1}}{e} (\log{b} - \log{  \log{a}} ).
\end{align*}

Then $\binom{\ell}{k_A} \le ( e \ell/k_A )^{k_A} = a.$
Also 
\begin{align*}
\binom{\ell}{k_B} \left/ \binom{\ell-k_A+1}{k_B}\right. 
&\le \left( \frac{\ell - k_B}{\ell-k_A-k_B}\right)^{k_B}
\le \left( \frac{2^t-e}{2^t-2e} \right)^{k_B}
\le \exp \left( \frac{2e}{2^t} k_B \right)
= \frac{b}{\log a}.
\end{align*}
Here we used the estimation $(x-e)/(x-2e) \le \exp(2e/x)$ for $x \ge 16.$

For the latter case, choose
$$
\ell=\left( 2^t +\frac{1}{t+\log_2 t+3}\right) \log_2 a,
\quad
k_A = 2^t \log_2 a
\quad\text{and}\quad
k_B = \frac{\log_2{b}- \log_2{  \log{a}}}{t+\log_2 t + 3}.
$$
Then $\binom{\ell}{k_A}=\binom{\ell}{\ell-k_A} \le ( e \ell/(\ell-k_A) )^{\ell-k_A} \le a$ since
\begin{align*}
e \ell/(\ell-k_A) 
&\le e(2^t(t+\log_2 t+3 ) +1)
\le 8t 2^t = 2^{t+\log_2 t+3}.
\end{align*}
Also
\begin{align*}
\binom{\ell}{k_B}\left/\binom{\ell-k_A+1}{k_B}\right. 
&\le \left( \frac{e \ell}{\ell-k_A}\right)^{k_B}
\le \frac{b}{\log a}.
\end{align*}
In either case, the result follows from Corollary~\ref{cor:completesteiner} and Theorem~\ref{thm:PropA}.

Consider condition~\ref{itm:symmetricasymmetriccomplete,nec}. 
	First we make a computation verifying $\overline M ((\log m)/2, m/2, m) < m$ when $m$ is sufficiently large.
By Theorem~\ref{thm:PropA}, for $m$ sufficiently large, we have that 
\begin{align*}
\overline M ( (\log m)/2, m/2, m) 
&\le \left( \frac{m}{m/2-(\log m)/2} \right)^{(\log m)/2} \log \binom{m}{ (\log m)/2 }
< \sqrt{m} (\log m)^2
< m.
\end{align*}

We choose $m=c (\Delta/\log \Delta )^{1/k} \log \Delta$, where $c=1/(4k(k-1))$.
By the above computation, this choice satisfies $\overline M ( (\log m)/2, m/2, m) \le \Delta$ for $\Delta$ large enough.
Let $b$ be such that $(k-1)b \log \Delta = (\log m)/2$, and note that $b \sim 2c$ as $\Delta\to\infty$.

Let  $G=(A \cup B,E)$ be a complete bipartite graph with
\[
\lvert A \rvert = \overline M ( (\log m)/2 ,m/2, m)
\quad\text{and}\quad
\lvert B \rvert= b \log \Delta \binom{ \lceil  m/(b \log \Delta) \rceil }{k}.
\]
Note that as $\Delta\to\infty$
\[
\lvert B \rvert \le b \log \Delta\frac{\lceil  m/(b \log \Delta) \rceil^k}{k!} \sim \frac{1}{k!} \left(\frac{c}{b}\right)^k b \Delta \le \Delta
\]
and thus $\Delta_A,\Delta_B\le \Delta$ for all $\Delta$ large enough.

	We define a $(k_A,k_B)$-list-assignment $L$ as follows. 
	Let $\mathcal F$ be a family of $\lvert A \rvert$ $(m/2)$-element subsets of $[m]$ without Property~A$( (\log m)/2 ,m/2, m)$ and
	 assign the sets of $\mathcal F$ as lists to the vertices in $A$. So there is no $((\log m)/2)$-element subset of $[m]$ that intersects every list in $A$.
	 For $B$, arbitrarily partition $[m]$ into $b\log \Delta$ segments of nearly equal size, and assign as lists to the vertices in $B$ all possible $k$-element subsets chosen from within a single segment.
	 
	 Note that for any $L$-colouring, $\cup_{w\in B} L(w)$ intersects at least one colour from each $k$-element subset of a segment, and so $\cup_{w\in B} L(w)$ avoids at most $(k-1)b \log \Delta = (\log m)/2$ colours of $[m]$. However, as noted above, $\cup_{v\in A} L(v)$ must have more than $(\log m)/2$ colours of $[m]$, and this precludes a proper $L$-colouring.
%
%Consider condition~\ref{itm:steinercomplete,nec}. Let $q$ be a prime power. The result follows by combining Theorem~\ref{thm:stijnsteiner}\ref{itm:steiner3} and Corollary~\ref{cor:completesteiner}, observing that the blocks corresponding to triples of collinear points in the $2$-$(q^2+q+1,q+1,1)$ design are unnecessary, and noting that $\binom{q^2+q+1}{3}-(q^2+q+1) \binom{q+1}{3} = \frac16 q^3 (q + 1) (q^2 + q + 1)$.
\end{proof}

\section{Sharpness in a boundary complete case}\label{sec:boundary}

		In this section, we precisely solve Problem~\ref{prob:asymmetric} for complete bipartite graphs when $k_A \ge \Delta_A -1.$
		When $k_A \ge \Delta_A +1$, we know that any bipartite $G=(A \cup B,E)$ must be $(k_A,k_B)$-choosable.
		The case $k_A = \Delta_A$ is handled by Proposition~\ref{prop:classic} and the fact that its conclusion fails if $|A|<\delta^k$ or $|B|<k$. The remainder of this section is devoted to the case $k_A=\Delta_A-1$.

For this it will be useful to have the following simple lemma at our disposal.
Given a family $\mathcal{F}$ of disjoint subsets of $X$, a {\em transversal} of $\mathcal{F}$ is a subset of $X$ that intersects every set in $\mathcal{F}$ exactly once. An {\em almost-transversal} of $\mathcal{F}$ is a subset of $X$ that intersects all but one set in $\mathcal{F}$ exactly once.

\begin{lemma}\label{lem:counting_tuples}
Suppose $\mathcal{F}$ consists of $b$ disjoint sets  $F_1, F_2,\dots, F_b$ such that $\lvert F_1 \rvert \ge \lvert F_2 \rvert \ge \ldots \ge \lvert F_b \rvert$. Then every almost-transversal of $\mathcal{F}$ is subset of at most $\lvert F_1\rvert$ transversals of $\mathcal{F}$.
As a corollary, if $\mathcal{F}_\star$ is a family of almost-transversals of $\mathcal{F}$ such that every transversal of $\mathcal{F}$ contains some element of $\mathcal{F}_\star$, then $\mathcal{F}_\star$ must contain at least $\lvert F_2\rvert \cdots \lvert F_b\rvert$ elements.
\end{lemma}

\begin{proof}
Any almost-transversal of $\mathcal{F}$ can be extended to a transversal of $\mathcal{F}$ by adding exactly one element from $F_j$ for some $j\in[b]$, and so there are $\lvert F_j\rvert \le \lvert F_1\rvert$ choices for this. The corollary follows directly by a union bound argument.
\end{proof}

		\begin{proposition}\label{prop:kge4ChoosabilityofKk+1t}
			Let $b \ge 5$ and $\delta =q(b-1)+r$ with $0 \le r \le b-2$ and $q$ being integers such that $\delta \gg b.$
			Then the complete bipartite graph $G=(V{=}A\cup B,E)$ with $|A|=a$ and $|B|=b$ is $(b-1,\delta)$-choosable
			if and only if 
			$$ a < \delta^{b-1}-((b-2)q+r)^{b-1-r}((b-2)q+r-1)^r.$$
			Note that for $1\ll b\ll \delta$, this bound on $a$ is approximately $(1-1/e) \delta^{b-1}.$
		\end{proposition}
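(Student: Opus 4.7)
The proposition fixes the precise threshold $a_0 := \delta^{b-1} - ((b-2)q+r)^{b-1-r}((b-2)q+r-1)^r$ for non-$(b-1,\delta)$-choosability of $K_{a,b}$. The useful algebraic observation is that $(b-2)q+r = \delta - q$, so the subtracted term equals $\prod_{i=1}^{b-1}(\delta - y_i)$, where $y_1 + \cdots + y_{b-1} = \delta$ is the balanced partition with $y_i \in \{q,q+1\}$ and exactly $r$ of the $y_i$ equal to $q+1$. The proof splits into constructing a non-colourable list assignment of size exactly $a_0$ and showing that any strictly smaller $|A|$ admits a proper colouring.

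For the construction I would take pairwise disjoint colour sets $S_1,\dots,S_{b-1}$, each of size $\delta$, set $L(w_i) = S_i$ for $i<b$, and take $L(w_b) = Y$ where $Y \subseteq \bigsqcup_i S_i$ satisfies $|Y|=\delta$ with $|Y\cap S_i|$ balanced as above. To $A$ I would assign every transversal $T = \{t_1, \dots, t_{b-1}\}$ (with $t_i\in S_i$) satisfying $T\cap Y\ne\emptyset$: a direct count gives exactly $\delta^{b-1} - \prod_i(\delta - |Y\cap S_i|) = a_0$ such transversals. Any proposed colouring $c$ with $c(w_i) = x_i \in S_i$ for $i<b$ and $c(w_b) = y \in Y \cap S_{j^\ast}$ is blocked by the transversal $T^\ast$ obtained from $(x_1,\dots,x_{b-1})$ by replacing $x_{j^\ast}$ with $y$: indeed $T^\ast$ contains $y \in Y$ (so $T^\ast$ is an $A$-list) and $T^\ast \subseteq c(B)$. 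For $a>a_0$ one simply pads with arbitrary further lists.

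The converse direction, that $|A|<a_0$ forces a proper $L$-colouring for \emph{every} list assignment, is more subtle. Writing $Y := L(w_b)$, for each partial colouring $(x_1,\dots,x_{b-1})$ with $x_i \in L(w_i)$ I would consider the ``escape set'' of $y \in Y$ for which no $A$-list sits inside $\{x_1,\dots,x_{b-1},y\}$, and aim to find a partial colouring with nonempty escape set. In the canonical case where $L(w_1),\dots,L(w_{b-1})$ are pairwise disjoint, the degenerate colourings (those with $|c(B)| = b-1$) force every transversal intersecting $Y$ to appear as an $A$-list, and an AM--GM computation on $\prod_i(\delta - |Y\cap S_i|)$ confirms that the balanced $Y$ is the adversary's unique optimum, at precisely $|A|=a_0$. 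The main obstacle is extending this argument to arbitrary $B$-list configurations: shared colours among $L(w_1),\dots,L(w_{b-1})$ generate additional colourings with $|c(B)|<b$ that require bespoke blockers, and the clean transversal count no longer directly applies. I anticipate that the argument proceeds by a structural reduction, replacing an arbitrary $B$-list configuration by the canonical ``$(b-1)$ disjoint plus one overlapping'' one through local exchanges (splitting shared colours, rerouting overlaps into $Y$), and checking that each step can only leave the minimum $A$-list cover at least as large; the hypothesis $\delta \gg b$ should absorb lower-order error terms arising in these exchanges.
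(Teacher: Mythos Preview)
Your construction is correct and coincides with the paper's extremal configuration (after relabelling your $w_b$ as their $v_1$), and your lower bound in the canonical case via degenerate colourings is fine.

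For the converse, however, the paper does \emph{not} proceed by a monotone ``local exchange'' reduction to the canonical configuration, and it is unclear how to make such a reduction work. Instead the paper uses the following structural observation that you are missing: if some colour lies in three of the $B$-lists, or if two \emph{disjoint} pairs of $B$-lists each share a colour, then $B$ can be coloured using at most $b-2$ distinct colours, whereupon every vertex of $A$ (with list size $b-1$) automatically has a free colour. Consequently one may assume that the set of pairs $\{i,j\}$ with $L(w_i)\cap L(w_j)\ne\emptyset$ forms an intersecting family of $2$-subsets of $[b]$; such a family is either a star (all pairs through one vertex --- your canonical case) or the triangle $\{\{1,2\},\{1,3\},\{2,3\}\}$. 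The triangle case is handled by a separate direct count, yielding a threshold of approximately $\tfrac{11}{16}\,\delta^{b-1}$; since this exceeds the star-case threshold for all $b\ge 5$, the star is extremal and the stated value of $a_0$ follows.

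The genuine gap in your proposal is therefore the absence of this intersecting-family reduction. Your exchange idea would in particular have to transform the triangle configuration into a star while never increasing the minimum $A$-cover, and you give no mechanism for that step; the paper bypasses the issue by treating the triangle separately and comparing the two resulting thresholds numerically.
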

		
		\begin{proof}
			We write $B=\{v_1,v_2,\ldots, v_b\}.$
			Let $L$ be a $(b-1,\delta)$-list-assignment of $G$.
			If some colour is in the list of at least three different vertices of $B$, or two colours are both in the lists of two disjoint pairs of vertices of $B$, then we can certainly $L$-colour $G$. (The $L$-colouring uses the common colour(s) for the respective vertices and an arbitrary $L$-colouring for the remaining vertices of $B$, followed by a greedy $L$-colouring of the vertices in $A$.)
			Let $L_{12}=L(v_1) \cap L(v_2)$ and analogously define $L_{ij}$ for each $1 \le i <j \le b$ and let $\ell_{ij}=\lvert L_{ij}\rvert$.
			Consider the family of sets induced by the index pairs of the non-empty $L_{ij}$. This must form an intersecting family, for otherwise we have an $L$-colouring of $G$ similarly as above. We consider two cases depending on this family being trivial or not.

\begin{enumerate}
\item (non-trivial family)
		Without loss of generality, we may assume that only $L_{12}, L_{13}$ and $L_{23}$ are non-empty among all $L_{ij}$.
		If there are $b-1$ colours which do not occur as the list of a vertex of $A$, such that the vertices of $B$ can coloured with these $b-1$ colours, then $G$ is $L$-colourable.
		So if $G$ is not $L$-colourable, every such collection of $b-1$ colours has to occur as the list of a vertex in $A$.
		There are $(\delta-\ell_{12}-\ell_{13})(\delta-\ell_{12}-\ell_{23})(\delta-\ell_{13}-\ell_{23})\delta^{b-3}$ combinations of $b$ colours which appear only once among the lists $L(v_i), 1\le i \le b$. By Lemma~\ref{lem:counting_tuples}, every $(b-1)$-list $L(a)$ for some $a \in A$ can forbid at most $\delta$ of these and so at least $(\delta-\ell_{12}-\ell_{13})(\delta-\ell_{12}-\ell_{23})(\delta-\ell_{13}-\ell_{23})\delta^{b-4}$ such vertices are needed to forbid all these colourings.
		There are also $(\ell_{12}+\ell_{13}+\ell_{23})\delta^{b-2}-(\ell_{12}\ell_{13}+\ell_{12}\ell_{23}+\ell_{13}\ell_{23})\delta^{b-3}$ other possible colourings of $B$ which do use exactly $b-1$ different colours. So these possible colourings must also be forbidden by some list $L(a)$ if $G$ were not $L$-colourable.			
		
		This implies that $G$ is $L$-colourable if 
		%			In this case, we get exactly the calculation and result as has been done in Proposition~\ref{prop:k3Choosabilityof4t}, once multiplying with $\delta^{b-3}.$
		\begin{align*}
		a 
		&<(\ell_{12}+\ell_{13}+\ell_{23})\delta^{b-2}-(\ell_{12}\ell_{13}+\ell_{12}\ell_{23}+\ell_{13}\ell_{23})\delta^{b-3}\\
		&\quad+(\delta-\ell_{12}-\ell_{13})(\delta-\ell_{12}-\ell_{23})(\delta-\ell_{13}-\ell_{23})\delta^{b-4}\\
		&=\delta^{b-1}-(\ell_{12}+\ell_{13}+\ell_{23})\delta^{b-2}+(\ell_{12}+\ell_{13}+\ell_{23})^2\delta^{b-3}\\
		&\quad-(\ell_{12}+\ell_{13})(\ell_{12}+\ell_{23})(\ell_{13}+\ell_{23})\delta^{b-4}.
		\end{align*}
		By the AM--GM Inequality, the minimum of the last expression subject to a fixed sum $\ell_{12}+\ell_{13}+\ell_{23}$ occurs when $\ell_{12}=\ell_{13}=\ell_{23}$, and so the 
		minimum is attained when $\ell_{12}=\ell_{23}=\ell_{13}=\delta/4,$ leading to a bound of $\frac{11}{16} \delta ^{b-1}.$

\item (trivial family)
Without loss of generality, we assume only some of the $L_{1j}$ are non-empty and $\ell_{1b} = \min\{ \ell_{1j} \mid 2 \le j \le b\}.$
There are $$\delta^{b-1} - \prod_{j=2}^{b} (\delta - \ell_{1j})$$ possible ways to choose one colour from every $L(v_i), 2 \le i \le b$, in such a way that $L(v_1)$ does contain at least one of these chosen colours.
So if $A$ would not contain these combinations among the lists $L(a), a \in A$, one would be able to colour $G$ with these $b-1$ colours.
%			The number of choices of a set $S$ of $b-1$ colours such that any vertex has at least one colour of $S$ in its list equals $$\delta^{b-1} - \prod_{j=2}^{b} (\delta - \ell_{1j}).$$

%			We also need to forbid colouring the vertices of $B$ with all colours that appear only once in a list in $B$.
By Lemma~\ref{lem:counting_tuples}, one needs also 
$$\left(\delta - \sum_{j=2}^{b} \ell_{1j} \right)\prod_{j=2}^{b-1} (\delta - \ell_{1j})$$
lists $L(a), a \in A$, to make it impossible to colour $G$ by first colouring the vertices $B$ solely with colours not appearing among the $L_{1j}.$
Hence the number of disjoint lists $L(a)$ to make a list-colouring of $G$ impossible needs to be at least
$$\delta^{b-1} - \left( \sum_{j=2}^{b-1} \ell_{1j} \right) \prod_{j=2}^{b-1} (\delta - \ell_{1j}).$$
This expression is minimised when $\delta-\sum_{j=2}^{b-1} \ell_{1j}$ and every $ \ell_{1j}$ for $2 \le j \le b-1$ are equal to a number of the form $\lfloor \frac{\delta}{b-1} \rceil,$ as by an integral version of the AM--GM Inequality the subtracted product attains its maximum when all factors (which sum to a fixed amount) differ by at most $1$.
When $ b\ll \delta$ and $b \ge 5$, we have that this is approximately $(1-(1- 1/(b-1) )^{b-1}) \delta^{b-1}$, which is smaller than the value $\frac {11}{16} \delta^{b-1}$ obtained in the previous case.
Equality can be attained, i.e.~$G$ is not $(b-1, \delta)$-choosable when $a = \delta^{b-1}-((b-2)q+r)^{b-1-r}((b-2)q+r-1)^r$ (or larger) as we can take the lists as being exactly those mentioned for minimising the expression. \qedhere
\end{enumerate}
		\end{proof}
	
		The same analysis also gives the result for $b \in \{3,4\}.$
		When $b=3$, the bound for $a$ is $\lfloor \frac 34 \delta ^2 \rceil.$
		When $b=4$, the same analysis as in Proposition~\ref{prop:kge4ChoosabilityofKk+1t} gives that the bound for $a$ occurs in the first case (non-trivial family), resulting in the following detailed proposition.
	
	 		\begin{proposition}\label{prop:k3Choosabilityof4t}
	 			Let $\delta \ge 2$.
	 			The complete bipartite graph $G=(V{=}A\cup B,E)$ with $|A|=a$ and $|B|=4$ is
	 			$(3, \delta)$-choosable if and only if $$a < 
	 			\begin{cases}
	 		\frac{11}{16} \delta^3 &\text{ if } \delta \equiv 0 \pmod 4,\\
	 		\frac{11}{16} \delta^3+ \frac{3}{16} \delta+\frac 1{8} &\text{ if } \delta \equiv 1 \pmod 4,\\
	 		\frac{11}{16} \delta^3+ \frac{1}{4} \delta&\text{ if } \delta \equiv 2 \pmod 4,\\
	 		\frac{11}{16} \delta^3+ \frac{3}{16} \delta-\frac 1{8} &\text{ if } \delta \equiv 3 \pmod 4.\\
	 		\end{cases}
	 			$$
	 		\end{proposition}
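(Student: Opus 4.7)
The plan is to specialise the two-case analysis of Proposition~\ref{prop:kge4ChoosabilityofKk+1t} to $b=4$ and then perform an exact integer optimisation. Let $L$ be a $(3,\delta)$-list-assignment of $K_{a,4}$, and set $L_{ij}=L(v_i)\cap L(v_j)$ and $\ell_{ij}=|L_{ij}|$ as there. The same initial reductions apply: we may assume that no colour lies in three of the lists in $B$ and that no two disjoint pairs of lists in $B$ share a common colour, so the pairs $\{i,j\}$ with $L_{ij}\ne\emptyset$ form an intersecting family on $[4]$. Since no non-trivial intersecting family of pairs on four vertices exceeds a triangle, this family is either a star (trivial) or the triangle $\{\{1,2\},\{1,3\},\{2,3\}\}$ (non-trivial).

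First I would repeat the counting of Proposition~\ref{prop:kge4ChoosabilityofKk+1t} in both cases with $b=4$. The trivial case yields the same expression as in that proof, whose continuous minimum equals $(1-(2/3)^3)\delta^3=\tfrac{19}{27}\delta^3$. The non-trivial case yields the cubic
\[
f(x,y,z) \;=\; \delta^3-s\delta^2+s^2\delta-(x+y)(x+z)(y+z),\qquad s=x+y+z,
\]
in the variables $x=\ell_{12}$, $y=\ell_{13}$, $z=\ell_{23}$ subject to $x+y,\,x+z,\,y+z\le\delta$, whose continuous minimum at $x=y=z=\delta/4$ equals $\tfrac{11}{16}\delta^3$. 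Since $\tfrac{11}{16}<\tfrac{19}{27}$, the non-trivial case is the binding one, in contrast to the $b\ge 5$ regime.

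The substance of the argument is then an exact integer minimisation of $f$ over $\mathbb{Z}_{\ge 0}^3$; the pairwise-sum constraints are inactive near the balanced point. I would split by $\delta\bmod 4$ and evaluate $f$ at the natural near-balanced triples, namely $(m,m,m)$ when $\delta=4m$, $(m,m,m)$ when $\delta=4m+1$, $(m,m,m+1)$ when $\delta=4m+2$, and $(m,m+1,m+1)$ when $\delta=4m+3$. Routine expansion at each of these produces exactly the four formulae asserted in the proposition. To rule out better integer minimisers, I would appeal to strict convexity of $f$ in a neighbourhood of $(\delta/4,\delta/4,\delta/4)$: a direct computation gives the Hessian there as
\[
H \;=\; \delta\begin{pmatrix}1 & 1/2 & 1/2\\ 1/2 & 1 & 1/2\\ 1/2 & 1/2 & 1\end{pmatrix},
\]
positive definite with eigenvalues $2\delta$ and $\delta/2$ (double), so any lattice minimiser lies within distance $O(1)$ of the continuous optimum, and a direct comparison of the few nearby triples suffices.

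Finally, for the ``only if'' direction I would take lists $L(v_1),L(v_2),L(v_3)$ of size $\delta$ realising the optimising intersection sizes together with $L(v_4)$ disjoint from the others, and let $A$ consist of every $3$-subset of $\bigcup_i L(v_i)$ that blocks a reduced colouring of $B$ as in Proposition~\ref{prop:kge4ChoosabilityofKk+1t}; by the counting of that proof, the number of such sets equals the minimum of $f$ and hence matches the claimed threshold exactly. I expect the main obstacle to be the residue-by-residue bookkeeping: each expansion of $f$ is elementary arithmetic, but the four formulae must be pinned down carefully, especially for $\delta\equiv 2\pmod 4$, where the minimum takes the compact form $\tfrac{11}{16}\delta^3+\tfrac{1}{4}\delta$ with no constant correction and one must verify that nearby triples such as $(m+1,m+1,m+1)$ do not beat $(m,m,m+1)$.
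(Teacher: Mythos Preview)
Your proposal is correct and follows exactly the route the paper takes: the paper's entire ``proof'' of this proposition is the one-sentence remark that the same two-case analysis as in Proposition~\ref{prop:kge4ChoosabilityofKk+1t} applies with $b=4$ and that the non-trivial (triangle) case is the binding one, yielding the stated thresholds. You have filled in precisely the details the paper omits --- the comparison $\tfrac{11}{16}<\tfrac{19}{27}$, the residue-by-residue evaluation of $f$ at the balanced triples, the local convexity via the Hessian, and the explicit list assignment for the ``only if'' direction --- so there is nothing to correct.
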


\section{Sharper than complete bipartite}\label{sec:improvement}

In this section, we prove that complete bipartite graphs are not exactly extremal for Problem~\ref{prob:asymmetric}. The complete bipartite graph $K_{\delta ^k-1, k}$ is $(k,\delta)$-choosable, but there are bipartite graphs with $\Delta_A=k$ and $ \Delta_B $ smaller than $\delta ^k-1$ which are not $(k,\delta)$-choosable.

\begin{proposition}\label{prop:improvement}
	For any $\delta, k \ge 2$, there is a bipartite graph $G = (V =A \cup B,E)$ with parts $A$ and $B$ having maximum degrees $k$ and $f(\delta, k)<\delta ^k$, respectively, that is not $(k,\delta)$-choosable.
	Moreover, $f( \delta, k) \le \sum_{i=1}^{\delta} i^{k-1}$.
	% and 	$f(\delta,2) \le \lfloor(\delta+2)^2/4\rfloor-1.$  
	%	Note if delta\ggk, then f(delta, k) ~ k^{ delta -1}
\end{proposition}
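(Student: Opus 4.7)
The approach is to construct $G$ and a $(k,\delta)$-list-assignment $L$ on it admitting no proper $L$-colouring, with $\Delta_A\le k$ and $\Delta_B\le \sum_{i=1}^\delta i^{k-1}$. Since Proposition~\ref{prop:classic} already achieves $\Delta_B=\delta^k$ using $K_{\delta^k,k}$, the idea is to blow up the $k$ vertices of $B$ into $k\delta$ vertices and spread the blocking work across $A$ so that each vertex in $A$ only touches a small part of $B$.

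Concretely, take $B=B_1\cup\cdots\cup B_k$ with $|B_i|=\delta$, fix pairwise disjoint colour sets $S_1,\ldots,S_k$ of size $\delta$, and set $L(v)=S_i$ for $v\in B_i$. Labelling $B_i=\{v_{i,c}:c\in S_i\}$, any $L$-colouring of $B$ is captured by $k$ functions $f_i:S_i\to S_i$ with $f_i(c)=c(v_{i,c})$, and we say $c\in S_i$ is a \emph{fixed point} of $f_i$ if $f_i(c)=c$. The set $A$ has two sorts of members. First, \emph{classical} blockers: for each $(c_1,\ldots,c_k)\in S_1\times\cdots\times S_k$, include $a$ with $N(a)=\{v_{1,c_1},\ldots,v_{k,c_k}\}$ and $L(a)=\{c_1,\ldots,c_k\}$, which is stuck precisely when $f_i(c_i)=c_i$ for every $i$; each vertex of $B$ lies in exactly $\delta^{k-1}$ such blockers. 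Second, \emph{auxiliary} blockers handle colourings where some $f_i$ has no fixed point; these have $N(a)$ concentrated in a single $B_i$ (padded by vertices from other groups when $\delta<k$) with $L(a)$ chosen to witness the missing fixed point of $f_i$.

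Non-choosability then follows by a dichotomy: if some tuple $(c_1,\ldots,c_k)$ satisfies $f_i(c_i)=c_i$ for each $i$, the corresponding classical blocker is stuck; otherwise some $f_i$ is fixed-point-free and an auxiliary blocker for that index catches the colouring. For the degree count, the classical contribution is exactly $\delta^{k-1}$ per vertex of $B$, and the auxiliary contribution should be arranged to be at most $\sum_{i=1}^{\delta-1} i^{k-1}$, giving the claimed total via the identity $\sum_{i=1}^\delta i^{k-1}=\delta^{k-1}+\sum_{i=1}^{\delta-1} i^{k-1}$.

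The main obstacle is the sparse design of the auxiliary layer. A naive attempt that adds one blocker per fixed-point-free function on each $B_i$ involves $(\delta-1)^\delta$ functions and blows the degree well past the target. The natural remedy is recursion: inside each $B_i$, the fixed-point-free colourings look like an instance of the $(k,\delta-1)$-non-choosability problem, since each vertex $v_{i,c}$ has effective list $S_i\setminus\{c\}$ of size $\delta-1$, so one can invoke the construction for parameters $(\delta-1,k)$ on $B_i$ and pick up an increment of $f(\delta-1,k)$ per vertex. Verifying that this recursion integrates cleanly with the classical blockers, and that the padded auxiliary blockers for $\delta<k$ still respect the degree bound, is the delicate step.
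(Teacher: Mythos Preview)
Your proposal sets up the classical blockers correctly, and the dichotomy (either every $f_i$ has a fixed point, or some $f_i$ is fixed-point-free) is valid. The gap is in the auxiliary layer. You propose to handle a fixed-point-free $f_i$ by recursing on $B_i$, observing that each $v_{i,c}$ then has effective list $S_i\setminus\{c\}$ of size $\delta-1$, and ``invoking the construction for parameters $(\delta-1,k)$ on $B_i$''. But your own construction at level $\delta-1$ has a $B$-part consisting of $k$ groups of $\delta-1$ vertices, each group carrying a \emph{common} list, with the $k$ lists pairwise disjoint. The set $B_i$, by contrast, is a single block of $\delta$ vertices whose effective lists $S_i\setminus\{c\}$ are all \emph{different} and all contained in one $\delta$-set $S_i$. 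These structures do not match, so the recursive call cannot be made as stated; there is no evident relabelling turning one into the other, and hence no reason the auxiliary degree increment should be $f(\delta-1,k)$. You flag this step as ``delicate'', but in fact it is the whole difficulty, and the sketch does not resolve it.

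The paper's argument uses a different, self-contained recursion that avoids this mismatch entirely. It does not blow up $B$. Instead it maintains, for each $i$, a graph $G_i$ with a list assignment $L_i$ and a distinguished vertex $b_i\in B_i$ that is forced to avoid $i$ specified colours from its $\delta$-list in any proper $L_i$-colouring. The base $G_1$ is $K_{\delta^{k-1},k}$ with disjoint $\delta$-lists on $B_1$ and, on $A_1$, all $k$-tuples through one fixed colour of $L(v_1)$ together with the remaining $L(v_j)$; this forbids that one colour at $b_1:=v_1$. For the step, take $k$ vertex-disjoint copies of $G_i$ on pairwise disjoint palettes, let $v_1,\dots,v_k$ be the $k$ copies of $b_i$ (each restricted to a set $L'(v_j)$ of $\delta-i$ colours), and add $(\delta-i)^{k-1}$ new $A$-vertices adjacent to $\{v_1,\dots,v_k\}$ whose lists run over all $k$-tuples through one fixed colour of $L'(v_1)$ and the sets $L'(v_2),\dots,L'(v_k)$. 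This forbids one more colour at $b_{i+1}:=v_1$, and only $b_{i+1}$ acquires new edges, exactly $(\delta-i)^{k-1}$ of them. After $\delta$ steps $b_\delta$ has no available colour, and $\Delta_B=\sum_{i=1}^{\delta}(\delta-i+1)^{k-1}=\sum_{i=1}^{\delta}i^{k-1}$. Here the inductive hypothesis is tailored so that the recursion matches it exactly; that is precisely what your scheme lacks.
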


\begin{proof}
	We recursively construct bipartite graphs $G_i=(A_i \cup B_i, E_i)$ with parts $A_i$ and $B_i$ having maximum degree $k$ and $\sum_{j=1}^i (\delta-j+1)^{k-1}$. We simultaneously define a $(k, \delta)$-list-assignment $L_i$ of $G_i$ such that there is some vertex $b_i\in B_i$ which can only be given one of $\delta-i$ colours out of its list in any proper $L_i$-colouring.
	
	Let $G_1$ be the complete bipartite graph $K_{ \delta^{k-1},k}$, and write $B_1=\{v_1, \ldots, v_k\}$.
	For the vertices of $B_1$, we assign $k$ disjoint lists of length $\delta$, specifically, $L(v_j)=\{(j-1)\delta+1,\dots,j\delta\}$ for $j\in [k]$.
	For the vertices of $A_1$, we assign as lists all possible $k$-tuples drawn from $\{1\}\times \prod_{j=2}^k L(v_j)$.
	Since $b_1 := v_1$ cannot be given the colour $1$ in any proper $L_1$-colouring, the conditions are satisfied for $i=1$.
	
	For the recursion, assume $i\ge 1$ and take the disjoint union of $k$ copies of $G_i$, relabelling their $(k, \delta)$-list-assignments so that their colour palettes are mutually disjoint.
	So the parts $A_{i+1},B_{i+1}$ of the bipartition so far include the disjoint unions of the $k$ respective parts.
	Let $v_1,\dots,v_k$ be the $k$ copies of $b_i$, and for each $j\in [k]$ write $L'(v_j)$ for the set of $\delta-i$ colours to which the colour of $v_j$ is restricted by assumption. By relabelling, we may assume $1 \in L'(v_1)$.
	We now add $(\delta-i)^{k-1}$ new vertices to $A_{i+1}$ that are adjacent to every $v_j$.
	For these new vertices, we assign as lists all possible $k$-tuples drawn from $\{1\}$ and $L'(v_j)$ for $2\le j\le k$.
	This completes the definition of $G_{i+1}$ and $L_{i+1}$.
	By induction, $b_{i+1} := v_1$ may only be given a colour from $L'(v_1)\setminus \{1\}$ in any proper $L_{i+1}$-colouring, and moreover $b_{i+1}$ is of maximum degree in $B_{i+1}$. So $(G_{i+1},L_{i+1},b_{i+1}) $ satisfies the required conditions. This completes the recursive step.
	
	The graph $G:=G_\delta$ with parts $A=A_\delta$ and $B=B_\delta$ is not $(k,\delta)$-choosable, since by construction we may not give any colour to $b_\delta$ in any proper $L_\delta$-colouring. Furthermore, the maximum degrees in $A$ and $B$ are respectively $k$ and $\sum_{i=1}^{\delta} (\delta-i+1)^{k-1}=\sum_{i=1}^{\delta} i^{k-1}$, as required.
\end{proof}

\section{Degrees and $(k_A,k_B)$-choosability}\label{sec:degrees}

In this section, we give a condition on the minimum degree for concluding that a bipartite graph is not $(k_A,k_B)$-choosable. This is a reduction to the behaviour for complete bipartite graphs.

		\begin{theorem}\label{thm:degrees}
			Suppose  the complete bipartite graph $G_0=(V{=}A_0\cup B_0,E)$ with $|A_0|=a$ and $|B_0|=b$ is not $(k_A,k_B)$-choosable.
			Then any bipartite graph $G=(V{=}A\cup B,E)$ with parts $A$ and $B$ such that $|A|\le |B|$ and $B$ has minimum degree $\delta_B > 4ab \log{ 4a} \log{k_A}$ is not $(k_A,k_B)$-choosable.
		\end{theorem}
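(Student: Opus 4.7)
The plan is to convert the bad $(k_A,k_B)$-list-assignment $L_0$ on $K_{a,b}=A_0\cup B_0$ (with $A_0=\{a_1,\ldots,a_a\}$, $B_0=\{b_1,\ldots,b_b\}$) into a bad list-assignment $L$ on $G$ by a random ``scaling up.'' The workhorse is the following reformulation of ``$L_0$ is bad,'' which I would establish first: for every choice $\beta_j\in L_0(b_j)$ ($j\in[b]$), there exists $i^*\in[a]$ with $L_0(a_{i^*})\subseteq\{\beta_1,\ldots,\beta_b\}$. (A consequence is that $k_A\le b$, which lets $\log k_A$ replace $\log b$ later.)

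Given this, I would construct $L$ as follows. Randomly and independently place each $u\in A$ into a class $A_i$ with $i(u)$ uniform on $[a]$, and each $v\in B$ into a class $B_j$ with $j(v)$ uniform on $[b]$; set $L(u):=L_0(a_{i(u)})$ and $L(v):=L_0(b_{j(v)})$. Assume for contradiction that a proper $L$-colouring $c$ exists. For each $j\in[b]$ pick any representative $v_j\in B_j$ and set $\beta_j:=c(v_j)$. The reformulation gives some $i^*$ with $L_0(a_{i^*})\subseteq\{c(v_j)\}_{j\in[b]}$. Then every $u\in A_{i^*}$ satisfies $c(u)\in L_0(a_{i^*})$, hence $c(u)=c(v_{j(u)})$ for some $j(u)\in[b]$, so by properness $u\not\sim v_{j(u)}$. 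Thus writing $W:=\bigcap_j N(v_j)$, we have $A_{i^*}\cap W=\emptyset$.

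The rest of the argument is to rule this out with positive probability. For fixed representatives, $|A_i\cap W|\sim\mathrm{Bin}(|W|,1/a)$, so $\Pr[A_i\cap W=\emptyset]\le(1-1/a)^{|W|}\le e^{-|W|/a}$; a union bound over $i\in[a]$ gives that every $A_i$ meets $W$ as soon as $|W|\gtrsim a\log(2a)$. The degree hypothesis $\delta_B>4ab\log(4a)\log k_A$ enters through a Chernoff bound: for each $v\in B$ and each $i$, $|N(v)\cap A_i|$ is binomial of mean $\ge\delta_B/a>4b\log(4a)\log k_A$, so it exceeds $2b\log(4a)\log k_A$ with failure probability $\exp(-\Omega(b\log(4a)\log k_A))$. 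A union bound over the $ab$ pairs $(j,i)$ associated with the chosen representatives is easily absorbed, giving the concentration ``every representative has many neighbours in every $A$-class.''

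The hard step I anticipate is the passage from this per-class concentration to $|W|\gtrsim a\log a$. The naive inclusion--exclusion $|A_i\cap W|\ge|A_i|-\sum_j|A_i\setminus N(v_j)|$ is too lossy when $|A|/a$ is much larger than $\delta_B/a$; instead one has to choose the representatives $v_j\in B_j$ adaptively (for instance by an iterative/greedy procedure on $B$ keeping the running intersection large), exploiting the bound $k_A\le b$ from the reformulation to replace a $\log b$ factor by $\log k_A$. Managing this interaction between the random partition of $A$, the choice of representatives in $B$, and the common neighbourhood $W$ is the main obstacle and accounts for the precise form of the condition $\delta_B>4ab\log(4a)\log k_A$.
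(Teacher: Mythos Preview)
Your reformulation of ``$L_0$ is bad'' is correct, and the deduction that no proper $L$-colouring exists once $W=\bigcap_j N(v_j)$ meets every $A_i$ is sound. The fatal problem is structural, not technical: if $W$ meets every $A_i$, then choosing one $u_i\in W\cap A_i$ for each $i$ produces vertices $u_1,\dots,u_a,v_1,\dots,v_b$ spanning a copy of $K_{a,b}$ in $G$. So your scheme, under \emph{any} choice of representatives, presupposes $K_{a,b}\subseteq G$. The hypothesis $\delta_B>4ab\log(4a)\log k_A$ does not guarantee this. For a concrete obstruction, take $a=b=3$, $k_A=k_B=2$ (so $K_{3,3}$ is not $(2,2)$-choosable), and let $G$ be a random bipartite graph with $|A|=|B|=n$ and edge probability $cn^{-2/3}$ for small fixed $c>0$. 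With positive probability $G$ has minimum degree in $B$ of order $n^{1/3}$, far exceeding the fixed threshold $4\cdot 9\cdot\log 12\cdot\log 2$, yet contains no $K_{3,3}$ at all. In such $G$ your argument cannot even get started, no matter how adaptively you select the $v_j$; the ``hard step'' you flag is not a gap to be filled but an impossibility.

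The paper's proof avoids any embedding of $K_{a,b}$. It first passes to a \emph{sparse} random subset $X\subset A$, each vertex retained independently with probability $p=1/(4b\log k_A)$, and assigns to every $x\in X$ a uniformly random list from the $A_0$-side family $\F_a$. The degree hypothesis is used exactly once, to show that a typical $v\in B$ is ``good'': every member of $\F_a$ occurs on some neighbour of $v$ inside $X$ (this needs $p\delta_B/a>\log(4a)$, i.e.\ $\delta_B>4ab\log(4a)\log k_A$). Fixing such an $X$ with $|X|\le 2p|A|$ and at least $|B|/2$ good vertices, the key idea is then to union bound over the at most $k_A^{|X|}$ possible $L_X$-colourings of $X$: for each fixed one, a random assignment of $\F_b$-lists to the good vertices blocks extension with probability at least $1-(1-1/b)^{|B|/2}$, and $k_A^{|X|}\le\exp(2p|A|\log k_A)\le\exp(|B|/(2b))$ closes the bound. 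The device you are missing is precisely this shrinking of the $A$-side to make the space of $A$-colourings small enough to enumerate; it replaces the need for a common neighbourhood entirely and is where the factor $\log k_A$ genuinely enters.
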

		
		\begin{proof}
			Let $\F_a$ and $\F_b$ be the collections of lists of sizes $a$ and $b$, respectively, that can be assigned to $A_0$ and $B_0$, respectively, to certify non-$(k_A,k_B)$-choosability of $G_0$.
			Let $p= 1/(4b \log{k_A}).$
			Randomly choose $X \subset A$, each vertex included independently with probability $p$.
			Then $\Exp(|X|)=p|A|$ and so by Markov's inequality, 
			$$\Pr(|X|>2p|A|)<\frac 12.$$
			Define a list-assignment $L_X$ of $X$, by assigning to every vertex of $X$ uniformly and independently a list of $\F_a$.
			Call a vertex $v$ in $B$ good if every member of $\F_a$ occurs as a list on a neighbour (in $X$) of $v.$
			For any $F\in \F_a$, let us say that {\em $v$ is not good due to $F$} if $F$ does not as occur as a list on a neighbour of $v$. Note that
			$$\frac{\Pr(v\text{ is not good due to }F)}{a} \le \left( 1- \frac p{a} \right)^{ \delta_B } \le \exp \left( - \frac p{a} \delta_B \right) < \frac{1}{4a}$$ implying that $$\Pr(v \text{ is not good}) < \frac 14.$$
			So by Markov's inequality, $$\Pr\left( |\{v \mid v \text{ is not good}\}| > |B|/2 \right) \le \frac{ \Exp(|\{v \mid v \text{ is not good}\}|)}{|B|/2} < \frac12.$$
			
			By the probabilistic method, there is some $X \subset A$ and a list-assignment $L_X$ of $X$ such that $|X| \le 2p|A|$ and there are at least $|B|/2$ good vertices.
			Fix this choice and let $B^*$ be the set of good vertices.
			
			Fix an arbitrary $L_X$-colouring $c_X$ of $X$. There are at most $k_A^{|X|}$ possibilities for the colouring $c_X$.
			Define a list-assignment $L_{B^*}$ of $B^*$, by assigning to every vertex of $B^*$ uniformly and independently a list of $\F_b$.
			Since every $v \in B^*$ is good, all lists of $\F_a$ occur in the neighbourhood of $v$ and at least one choice of a list in $\F_b$ would imply that $v$ cannot be properly coloured with a colour of that list.
			Hence the probability that every $v \in B^*$ can be properly $L_{B^*}$-coloured in agreement with $c_X$ is at most 
			$$\left( 1- \frac{1}{b}\right)^{|B^*|} < \exp\left( - \frac{|B|}{2b}\right).$$
			The probability that some proper colouring of $G$ can be completed given any $L_X$-colouring $c_X$ is smaller than 
			$$k_A^{|X|}\exp\left( - \frac{|B|}{2b}\right) \le \exp\left(2p|A|\log{k_A} - \frac{|B|}{2b}\right) \le 1.$$
			Thus by the probabilistic method there exists a list-assignment $L_{B^*}$ of $B^*$ such that  no proper $L_{B^*}$-colouring can be found in agreement with any $L_X$-colouring.
		\end{proof}

\section{Conclusion}

We have begun the investigation of an asymmetric form of list colouring for bipartite graphs.
In one direction, we have found good general sufficient conditions through connections to independent transversals and to the coupon collector problem.
This has incidentally yielded a non-trivial advance towards a difficult conjecture of Krivelevich and the first author.
In another direction, we have established broad necessary conditions through an unexpected link between the bipartite choosability of complete bipartite graphs and a classic extremal set theoretic or design theoretic parameter.
This link has fed naturally into the formulation of three attractive conjectures along these lines.
Because of the rich connections this problem has to other important areas of combinatorial mathematics, we are hopeful that further study will lead to novel insights.
We remark that Conjecture~\ref{conj:general} comprises three asymptotic parameterisations of Problem~\ref{prob:asymmetric} that we found most natural and interesting, all derived essentially from Theorem~\ref{thm:completesteiner}. There could be several other nice choices.
Because the terrain is new, there are many interesting angles we have not yet had the opportunity to fully explore.
	
	One possibility, based on the connection to combinatorial design theory, comes to mind. We have that $\overline{M}(2,q^2,q^2+q+1)=q^2+q+1$ for every prime power $q$ due to the finite projective planes. With a small modification of the substitution of this fact into Corollary~\ref{cor:completesteiner}, we obtain that the complete bipartite graph $K_{\frac16q^3(q+1)(q^2+q+1),q^2+q+1}$ is not $(3,q^2)$-choosable.
	On the other hand, Lemma~\ref{lem:completeupper} shows this is not that far from optimal, and in particular~\eqref{eqn:completeupper2} shows that $K_{q^6/(80\log q)^2,q^2+q+1}$ is $(3,q^2)$-choosable. It would be interesting to narrow the gap.
	For the specific case $q=2$, a quick computer search checks that $K_{20,7}$ is not $(3,4)$-choosable, but finding the largest $r$ such that $K_{r,7}$ is $(3,4)$-choosable seems difficult.

	\paragraph{Acknowledgement.} The authors wish to thank the anonymous referees for their careful reading and helpful comments.

\bibliographystyle{abbrv}%abbrv
\bibliography{bilist}
%\setlength\itemsep{-2.9pt}
%\small

%\newpage
\appendix

\section{Extremal analysis of approximate Steiner systems}

\begin{proof}[Proof of Theorem~\ref{thm:PropA}]
%\begin{proof}[Proof of lower bound in Theorem~\ref{thm:PropA}]
First we prove the lower bound.
Fix a family $\mathcal{F}$ of $k_2$-element subsets of $[\ell]$ with cardinality less than the leftmost expression. Choose $C$ a $k_1$-element subset of $[\ell]$ uniformly at random.
For any fixed $F \in \mathcal{F}$, we have
\begin{align*}
\Pr(F \cap C = \emptyset) 
&
 = \binom{\ell-k_2}{k_1}\left/\binom{\ell}{k_1}\right.
 = \frac{(\ell-k_2)!(\ell-k_1)!}{\ell!(\ell-k_1-k_2)!}. 
\end{align*}
By a union bound and the choice of cardinality of $\mathcal{F}$,
\begin{align*}
\Pr(F\cap C = \emptyset\text{ for some }F\in{\mathcal F}) \le
\sum_{F\in\mathcal{F}}\Pr(F \cap C = \emptyset) < 1.
\end{align*}
So with positive probability there is a set $C$ certifying that $\mathcal{F}$ has Property~A$(k_1,k_2,\ell)$.
%\end{proof}

%\begin{proof}[Proof of upper bound in Theorem~\ref{thm:PropA}]
Next we prove the upper bound.
Fix $C$ a $k_1$-element subset of $[\ell]$. Let $F$ be a $k_2$-element subset of $[\ell]$ chosen uniformly at random. Then
\begin{align*}
\Pr(F \cap C = \emptyset) 
& = \binom{\ell-k_1}{k_2}\left/\binom{\ell}{k_2}\right. = \frac{(\ell-k_2)!(\ell-k_1)!}{\ell!(\ell-k_1-k_2)!}. 
\end{align*}
Let $\mathcal{F} = \{F_1,\dots,F_m\}$ be a family of $m$ $k_2$-element subsets of $[\ell]$ chosen uniformly at random. Based on the above calculation,
\begin{align*}
\Pr( F_i \cap C \ne \emptyset\text{ for all }i\in\{1,\dots,m\}) \le \left(1-\frac{(\ell-k_2)!(\ell-k_1)!}{\ell!(\ell-k_1-k_2)!}\right)^m.
\end{align*}
There are $\binom{\ell}{k_1}$ choices for $C$, so we have
\begin{align*}
\Pr(\mathcal{F}&\text{ contains a $C$ certifying Property~A$(k_1,k_2,\ell)$}) \\
& \le \binom{\ell}{k_1}\exp\left(-m\frac{(\ell-k_2)!(\ell-k_1)!}{\ell!(\ell-k_1-k_2)!}\right).
\end{align*}
This last expression is less than $1$ if
\begin{align*}
m > \frac{\ell!(\ell-k_1-k_2)!}{(\ell-k_2)!(\ell-k_1)!}\log\binom{\ell}{k_1},
\end{align*}
which establishes the upper bound.
\end{proof}

%\begin{proof}[Alternative proof of upper bound in Theorem~\ref{thm:PropA}]
%	Let $p= \binom{\ell-k_2}{k_1} \left/ \binom{\ell}{k_1}\right..$
%	Note that this is the fraction of $k_1$-element subsets of $[\ell]$ which are contained in the complement of a fixed $k_2$-element subset.
%	Through an iterative process, we cover all $k_1$-element subsets with $(\ell-k_2)$-element subsets.
%	In every step, take the $(\ell-k_2)$-element subset which covers the largest number of yet uncovered $k_1$-element subsets. This number will be at least a fraction $p$ of the uncovered $k_1$-element subsets. 
%	For this, note that on average the $(\ell-k_2)$-element subsets cover a fraction $p$ of the uncovered ones and the subsets which are taken already cover no uncovered ones by definition.
%	Hence after step $n$, at most $(1-p)^n \cdot \binom{\ell}{k_1}$ of the $k_1$-element subsets are uncovered. So taking $n>\log \binom{\ell}{k_1}/(-\log(1-p))$ $(\ell-k_2)$-element subsets gives a covering.
%	Taking all complements of this gives a family $\mathcal{F}$ such that there is no $k_1$-element subset intersecting every element of $\mathcal{F}$.
%	Since $-\log{(1-p)}>p$ for $0\le p \le 1$, the conclusion follows.
%\end{proof}

\end{document}